\newcommand{\R}{\mathds{R}}
\newcommand{\N}{\mathds{N}}
\newcommand{\be}{\begin{enumerate}}
\newcommand{\ee}{\end{enumerate}}
\newcommand{\coloneqq}{\mathrel{\mathop:}=} %fuer huebsches :=
\newcommand{\eqqcolon}{=\mathrel{\mathop:}} %fuer huebsches =:
\newcommand{\BIGOP}[1]
{
\mathop{\mathchoice%
{\raise-0.22em\hbox{\huge $#1$}}%
{\raise-0.05em\hbox{\Large $#1$}}{\hbox{\large $#1$}}{#1}}} %fuer grosse Operatoren
\theoremstyle{plain}
\newtheorem{theorem}{Theorem}
\newtheorem{lemma}{Lemma}[section]
\newtheorem{corollary}{Corollary}[section]
\theoremstyle{definition}
\newtheorem{definition}{Definition}[section]
\newtheorem{example}{Example}[section]
\theoremstyle{remark}
\newtheorem{case}{Case}
\newtheorem{subcase}{Case}
\numberwithin{subcase}{case}
\begin{document}
%-----------------------------------HEADER-----------------------------------------------------------------------
%	\begin{center}
%		{\Large \textbf{Maximal Non-Exchangeability in Dimension \textit{d}}}
%		\vspace{0.25cm} \linebreak
%	\end{center}
%
%	\bigskip{}
\title{Maximal Non-Exchangeability in Dimension \textit{d}}
\author{Michael Harder \and Ulrich Stadtm\"uller}
\address{University of Ulm, Institute of Number Theory and Probability Theory, 89069 Ulm, Germany}
\email{michael.harder@uni-ulm.de}

\begin{abstract}
We give the maximal distance between a copula and itself when the argument is permuted for arbitrary dimension, generalizing a result for dimension two by \citet{nelsenExtrem,KlementMesiar}. Furthermore, we establish a subset of $[0,1]^d$ in which this bound might be attained. For each point in this subset we present a copula and a permutation, for which the distance in this point is maximal. In the process, we see that this subset depends on the dimension being even or odd.
\end{abstract}
\maketitle
\let\thefootnote\relax\footnote{Preprint submitted to Journal of Multivariate Analysis}
%-----------------------------------Inhalt-----------------------------------------------------------------------
\section{Introduction}
Studying the dependence structure in the distribution function $H$ of a $d$-dimensional continuous random vector $\bm{X}$ the so called copula is crucial. This is the distribution $C$ of the random vector $\bm{U}$ with components $U_i=F_i(X_i)$ where $F_i$ is the one-dimensional marginal distribution of $X_i\,.$ For details, see Sklar's Theorem in \cite{sklar59}.

Of interest are in particular parametric classes of such copulas. The usual examples, however, have the disadvantage that they share some symmetry properties. Quite popular are Archimedean copulas which have the form
$$ C(u_1,...,u_d)=\varphi (\varphi^{-1}(u_1)+ \dots,\varphi^{-1}(u_d))\,, $$
with a generating function $\varphi(s)$ being most often the Laplace transform of a distribution on $(0,\infty)$. If these generating functions contain some parameter $\theta$ we are given a parametric copula model. However, a random vector $U$ having this copula as a distribution has exchangeable components. But it is not clear whether data which have to be investigated follow an exchangeable copula. On the way to look for tests on exchangeability one comes across the question: what is the maximal distance between a copula and a version of it where the arguments are permuted. This paper is devoted to this question.

In the following, let $d \in \N\setminus\{1\}$ denote the dimension.
\begin{definition}\label{exchVecDef}
	A random vector $\bm{X}\coloneqq(X_1,\ldots,X_d)^\top$ is called \emph{exchangeable}, if its law coincides with the law of the random vector $\bm{X}_\pi \coloneqq (X_{\pi(1)},\ldots,X_{\pi(d)})^\top$, where $\pi \in S_d$ is a permutation of $\{1,\ldots,d\}$.
\end{definition}
Let $H$ be the cdf of $\bm{X}$ and $H_\pi$ the cdf of $\bm{X}_\pi$. Then it is straightforward to see, that if $\bm{X}$ is exchangeable, then all marginal cdfs must be identical.
\begin{definition}
	A mapping $F:\R^d \mapsto \R$ is called \emph{exchangeable}, if \begin{equation*}F(x_1,\ldots,x_d) = F(x_{\pi(1)},\ldots,x_{\pi(d)})\end{equation*} holds for all $(x_1,\ldots,x_d)^\top \in \R^d$ and all permutations $\pi \in S_d$.
\end{definition}
Note, that instead of \emph{exchangeable} the notion \emph{symmetric} is used as well (e.\,g.\ for aggregation functions by \citet{Grabisch}), which however is not used in a uniquely defined way (e.\,g.\ \citet{NelsenSymmetry} defines four different kinds of symmetry of a distribution function).
It may seem unusual to use the same word for a property of a random vector as well as for a property of a mapping. But it is easy to verify that a random vector is exchangeable if and only if its cdf is exchangeable. From the famous theorem by \citet{sklar59} it follows that a multivariate cumulative distribution function is exchangeable if and only if its copula is exchangeable (provided that all marginal cdfs are identical). In the following, we will address the exchangeability---or rather the lack of this property---of copulas.\\
 Now, being interested in statistical tests to decide whether some data come from an exchangeable copula it is important to know how big the difference of a copula from itself with permuted components can be. For exchangeable copulas this difference is zero. Here comes the first result in this direction.\\
\citet{nelsenExtrem} shows that for $d=2$ and any copula $C$ it holds that
\begin{equation}\label{nelsenEq}
	\lvert C(\bm{u}) - C(\bm{u}_\pi) \rvert \leq \frac{1}{3}\quad \mbox{ for all } \bm{u} \in [0,1]^2 \mbox{ and  all }\pi \in S_2\,.
\end{equation}
The same result has been published independently by \citet{KlementMesiar}. For $\pi = \mbox{id}$ obviously $C(\bm{u})=C(\bm{u}_\pi)$, so for $d=2$ there's only one interesting permutation, namely $\pi=\tau(1,2)$, i.e. the transposition of $u_1$ and $u_2$. The bound in \eqref{nelsenEq} is the best possible, as \citet{nelsenExtrem} demonstrates by showing that
\begin{equation*}
	 C(u_1,u_2)\coloneqq\min\biggl\{u_1,u_2,\Bigl(u_1-\frac{1}{3}\Bigr)^++\Bigl(u_2-\frac{2}{3}\Bigr)^+\biggr\}
\end{equation*}
is a copula and for $\bm{u}\coloneqq\bigl(\frac{1}{3},\frac{2}{3}\bigr)^\top$ the bound in \eqref{nelsenEq} is attained. As usual we denote by $f^+\coloneqq\max\{f,0\}\,.$

By defining $\tilde{C}(u_1,u_2) \coloneqq C(u_2,u_1)$ for any $(u_1,u_2)^\top \in [0,1]^2$, we obviously get another copula $\tilde{C}$. Therefore, \eqref{nelsenEq} could be rewritten as
\begin{equation}\label{nelsenEq2}
	\max_{\bm{u} \in [0,1]^2} \lvert C(\bm{u}) - \tilde{C}(\bm{u}) \rvert \leq \frac{1}{3}
\end{equation}
i.\,e.\ the maximal absolute difference between two copulas. However, the difference between two arbitrary 2-dimensional copulas in the same point is at most $0.5$, as
\begin{equation*}
	\lvert C_a(\bm{u}) - C_b(\bm{u}) \rvert \leq M(\bm{u}) - W(\bm{u}) \leq M\bigl(\tfrac{1}{2},\tfrac{1}{2}\bigr)-W\bigl(\tfrac{1}{2},\tfrac{1}{2}\bigr) = \frac{1}{2}
\end{equation*}
shows, where $M(u_1,u_2)\coloneqq\min\{u_1,u_2\}$ and $W(u_1,u_2)\coloneqq\max\{0,u_1+u_2-1\} $ are the upper and lower Fr\'{e}chet-Hoeffding-bounds, respectively. Note that this bound is best possible since it is attained by the two copulas $M$ and $W\,.$  Whereas the extension of the latter inequality to arbitrary dimension $d$ is obvious this is not the case for the inequality (\ref{nelsenEq}). Hence, it is aim of the present paper is to extend  inequality (\ref{nelsenEq}) to arbitrary dimension $d$ and to investigate the copulas and the set of points where this bound is attained.
%So we have to keep in mind, that as much as \eqref{nelsenEq2} isn't the difference between two arbitrary copulas at a point point, \eqref{nelsenEq}
%isn't the difference between one copula evaluated at two arbitrary points (in that case, for example $C(1,1) - C(0,0)= 1$ holds for any copula $C$). %Just as the points $u$ and $u_\pi$ in \eqref{nelsenEq} are not arbitrary (up to their order, they have the same components), the copulas $C$ and $\tilde{C}$ in \eqref{nelsenEq2} are not arbitrary either, but one determines the other.
%%
%%%%%%%%%% Section %%%%%%%%%%
%
\section{Main Result}\label{SecMainRes}
Now, let's state the main theorem of this paper, generalizing the inequality \eqref{nelsenEq} to arbitrary dimension $d$. Just like in Definition \ref{exchVecDef}, given a vector $\bm{u}\in [0,1]^d $,  we write   $\bm{u}_{\pi}\coloneqq(u_{\pi(1)}, \ldots, u_{\pi(d)})^\top$ for the vector whose components are permuted according to $\pi \in S_d$.
\begin{theorem}\label{mainTheo}	
	Let $C$ be a $d$-copula. Then
	\begin{equation}\label{mainTheoEq}
		\max_{\bm{u}\in[0,1]^d}\lvert C(\bm{u})- C(\bm{u}_\pi) \rvert \leq \frac{d-1}{d+1}
	\end{equation}
	holds true for any permutation $\pi \in S_d$.  The bound is best possible, i.e. for each dimension $d$ there exists a $d$-copula $C$, a permutation $\pi\in S_d$ and a vector $\bm{u}^\ast \in [0,1]^d$, such that $\lvert C(\bm{u}^\ast)- C(\bm{u}^\ast_\pi)\rvert =  \frac{d-1}{d+1}$.
\end{theorem}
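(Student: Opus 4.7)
My plan is to combine two independent bounds on $|C(\bm{u}) - C(\bm{u}_\pi)|$ and then to bound their minimum by $(d-1)/(d+1)$. First, the Fréchet--Hoeffding inequalities $\max(0, \sum_i v_i - (d-1)) \leq C(\bm{v}) \leq \min_i v_i$ are both invariant under the permutation $\bm{v} \mapsto \bm{v}_\pi$, so applying them to $\bm{u}$ and $\bm{u}_\pi$ yields
\[
|C(\bm{u}) - C(\bm{u}_\pi)| \leq \alpha(\bm{u}) \coloneqq \min_i u_i - \max\bigl(0, \textstyle\sum_i u_i - (d-1)\bigr).
\]
Secondly, since $C$ is monotone and $1$-Lipschitz in each argument, I write $C(\bm{u}) - C(\bm{u}_\pi) = [C(\bm{u}) - C(\bm{u} \wedge \bm{u}_\pi)] + [C(\bm{u} \wedge \bm{u}_\pi) - C(\bm{u}_\pi)]$ and bound each bracket by $\sum_{i: u_i > u_{\pi(i)}}(u_i - u_{\pi(i)})$; since $\sum_i u_i = \sum_i u_{\pi(i)}$ makes the positive and negative signed sums equal, this gives
\[
|C(\bm{u}) - C(\bm{u}_\pi)| \leq S(\bm{u},\pi) \coloneqq \tfrac{1}{2}\sum_i |u_i - u_{\pi(i)}|.
\]

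Relabelling coordinates so that $u_1 \leq \ldots \leq u_d$, I observe that $S(\bm{u},\pi)$ is maximised over $\pi$ by the reversal, yielding a $\pi$-free upper bound $S^*(\bm{u})$ equal to the sum of the top $\lfloor d/2 \rfloor$ entries minus the sum of the bottom $\lfloor d/2 \rfloor$ (with the middle entry untouched when $d$ is odd). The key claim is then $\min\{\alpha(\bm{u}), S^*(\bm{u})\} \leq B \coloneqq (d-1)/(d+1)$, which I prove by contradiction. Assume $\alpha(\bm{u}) > B$. Then either (Case A) $\sum_i u_i \leq d-1$ and every $u_i > B$, or (Case B) $\sum_i u_i > d-1$ and $u_1 + (d-1) - \sum_i u_i > B$. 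In both cases, combining the case assumption with $\sum_{i \leq \lfloor d/2 \rfloor} u_i \geq \lfloor d/2 \rfloor u_1$ forces $S^* \leq B$ after a short arithmetic manipulation using the explicit form of $B$. This computation must be carried out separately for $d$ even and $d$ odd because the middle coordinate is handled differently in $S^*$, and that is precisely the source of the parity dependence highlighted in the abstract.

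For sharpness I construct, for each $d$, an extremal triple $(\bm{u}^*, \pi, C)$ on which both bounds above are simultaneously tight. A natural candidate takes $\bm{u}^*$ with coordinates only in $\{B, 1\}$ (proportions depending on parity), $\pi$ the reversal, and $C$ built by coupling one block of coordinates comonotonically (so that $C(\bm{u}^*)$ realises the upper Fréchet--Hoeffding bound $\min_i u_i^* = B$) and another block through a cyclic-shuffle-type coupling (so that $C(\bm{u}^*_\pi)$ realises the lower bound $0$). The main obstacle lies here: verifying that the resulting measure is indeed a $d$-copula with uniform marginals, and giving the complete geometric description of the subset of admissible $\bm{u}^*$, which assumes visibly different shapes depending on whether $d$ is even or odd.
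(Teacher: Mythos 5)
The upper-bound half of your argument is correct and takes a genuinely different route from the paper. The paper factors $\pi$ into at most $d-1$ transpositions, proves a one-transposition lemma ($\lvert C(\bm{u})-C(\bm{u}_{\tau_{ij}})\rvert\le\lvert u_i-u_j\rvert$), and telescopes to get $\lvert C(\bm{u})-C(\bm{u}_\pi)\rvert\le\sum_{i=2}^d(u_i-u_{(1)})$. Your single Lipschitz step through $\bm{u}\wedge\bm{u}_\pi$ yields the sharper per-permutation bound $\tfrac12\sum_i\lvert u_i-u_{\pi(i)}\rvert$, whose maximum over $\pi$ is $S^*(\bm{u})=\sum_{i>\lceil d/2\rceil}u_i-\sum_{i\le\lfloor d/2\rfloor}u_i$; this is at least as strong as the paper's Lemma \ref{improvBoundLemma}, which the paper only obtains in Section \ref{SecAddRes} by a lengthy induction on the number of non-fixed points, so your route buys that refinement essentially for free. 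The closing step also works, and you do not actually need a parity split there: writing $\tilde u_i=u_i-B$ with $B=\tfrac{d-1}{d+1}$, either $u_{(1)}<B$ (then $\alpha<B$), or $\sum_iu_i<d-1$ (then $S^*\le\sum_i\tilde u_i<B$), or $\alpha\le 2B-\sum_{i\ge2}\tilde u_i$ while $S^*\le\sum_{i\ge2}\tilde u_i$, so $\alpha+S^*\le 2B$.

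The genuine gap is in the sharpness half, and it is twofold. First, your candidate $\bm{u}^*$ with coordinates only in $\{B,1\}$ cannot work when $d$ is even: if $k$ coordinates equal $1$, attaining the value $B$ requires $M_d(\bm{u}^*)-W_d(\bm{u}^*)\ge B$, which forces $W_d(\bm{u}^*)=0$, i.e.\ $k\le\tfrac{d-1}{2}$, while your own bound $S^*(\bm{u}^*)\ge B$ forces $k\cdot\tfrac{2}{d+1}\ge\tfrac{d-1}{d+1}$, i.e.\ $k\ge\tfrac{d-1}{2}$; hence $k=\tfrac{d-1}{2}$, which is not an integer for even $d$. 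Indeed the paper's extremal point \eqref{uStern} has, for even $d$, one coordinate at the intermediate value $\tfrac{d}{d+1}$, and Lemma \ref{oddLemma} shows every extremal point for even $d$ must have such intermediate coordinates. Second, and more fundamentally, you defer exactly the step that carries the weight of the ``best possible'' claim: producing an explicit $d$-dimensional mass distribution and verifying that it has uniform margins. The paper does this via the explicit formula \eqref{CopBsp}, the computation in Lemma \ref{BspLemma} showing $C^*(\bm{u}^*)=0$ and $C^*(\bm{u}^*_\pi)=\tfrac{d-1}{d+1}$, and the identification of $C^*$ as a multivariate shuffle of min in the sense of \citet{Durante2010}. ``Coupling one block comonotonically and another through a cyclic-shuffle-type coupling'' is a plan for that construction, not a proof of it, so as it stands only the inequality in \eqref{mainTheoEq} is established.
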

\noindent{\bf Remarks:}{\it \\
i) The difference between two arbitrary copulas $C_1$ and $C_2$ of  dimension $d$ can be bounded for all $\bm{u} \in [0,\,1]^d $ as follows
\[ |C_1(\bm{u})-C_2(\bm{u})|\le M_d(\bm{u})-W_d(\bm{u})\le M_d(\bm{u}^*)-W_d(\bm{u^*})= \frac{d-1}{d}  \]
with the Fr\'{e}chet-Hoeffding-bounds $M_d(\bm{u})=\min\{u_1,\dots,u_d\}$ and %$W_d(\bm{u})=\max\{u_1+\dots +u_d-d+1,0\} $
$W_d(\bm{u})=\max\{\sum_{i=1}^du_i-d+1,0\} $, and $u_j^*=(d-1)/d$
for all $j=1,\dots,d\,.$ Although $W_d$ is no copula for $d>2$, the bound $\frac{d-1}{d}$ is best possible, since for every fixed $\bm{u} \in [0,1]^d$ there exists a copula $C$, such that $C(\bm{u})=W_d(\bm{u})$ (see e.\,g.\ \citet{nelsen} or for an exact form of such a copula with given diagonal section, see \citet{Jaworski}).\\[1mm]
ii) If we assume $u_1^\ast \leq u_2^\ast$,  \citet{nelsenExtrem} shows that for $d=2$ there is exactly one $\bm{u}^\ast=\bigl(\frac{1}{3},\frac{2}{3}\bigr)^\top$ for which the maximum in \eqref{mainTheoEq} is attained. %Assumed $u_1^\ast \leq u_2^\ast \leq u_3^\ast$ the same holds true for $d=3$ and  $\bm{u}^\ast=\bigl(\frac{1}{2},\frac{1}{2},1\bigr)^\top$. Still, there are two possible choices for $\bm{u}_\pi^\ast$: Either $\bm{u}_\pi^\ast=(u_3^\ast,u_1^\ast,u_2^\ast)^\top$ or $\bm{u}_\pi^\ast=(u_2^\ast,u_3^\ast,u_1^\ast)$. For $d =4$ we lose uniqueness, even if we assume $u_i^\ast \leq u_{i+1}^\ast$ ($i=1,\ldots,3$). Our general example (see \eqref{CopBsp}) works with $\bm{u}^\ast = (0.6,0.6,0.8,1)^\top$. There's another copula which is maximal non-exchangeable in $\bm{u}^\ast = (0.6,0.6,0.9,0.9)^\top$.\\[1mm]
Under the condition, that $u^\ast_1 \leq \cdots \leq u^\ast_d$, we get nonuniqueness or uniqueness of $\bm{u}^\ast$ %only for $d=2n+1, n\in\N$.
depending on $d$ being even or odd. For $d=2n+2, n \in \N$ there are infinitely many choices for such a $\bm{u}^\ast$---yet within some lower dimensional manifold. In any case, for $d>2$, a fixed $\bm{u}^\ast$ and a fixed copula $C$, such that the bound in \eqref{mainTheoEq} is achieved, there's still more than one choice for the permutation $\pi$. This will be discussed in more detail in Section \ref{SecAddRes}. \\[1mm]
iii) Based on our result we could define 
\begin{equation*}
	\mu(C) \coloneqq \frac{d+1}{d-1} \,\max\limits_{\pi \in S^d}\max\limits_{\bm{u} \in [0,1]^d} |C(\bm{u})-C(\bm{u}_\pi)|
\end{equation*} 
as a measure of non-exchangeability for the copula $C\,$. Note, that the definition of measures of non-exchangeability by \citet{Durante2010measures} is just for bivariate copulas and therefore not applicable in this case.
}
\vspace*{2mm}

In the following corollary we see that Theorem \ref{mainTheo} is not just a statement about exchangeability, but also has consequences for the possible choices of lower dimensional margins of a copula. For example, if $d>3$ there exists no copula, of which two $(d-1)$-dimensional margins $C_a$ and $C_b$ coincide on the point $\frac{d-2}{d-1}(1,\ldots,1)^\top$ with the Fr\'echet-Hoeffding-bounds.
\begin{corollary}\label{raenderCor}
	Let $d>3$, $C$ be a $d$-copula and $1\leq k<\frac{d-1}{2}$. Let $C_{(d-k),a}$ and $C_{(d-k),b}$ two $(d-k)$-dimensional margins of $C$. Then
	\begin{equation*}
		\lvert C_{(d-k),a}(\tilde{\bm{u}})-C_{(d-k),b}(\tilde{\bm{u}}) \rvert \leq \frac{d-1}{d+1} < \frac{d-k-1}{d-k} = M_{d-k}(\bm{u}^\ast)-W_{d-k}(\bm{u}^\ast)
	\end{equation*}
	for all $\tilde{\bm{u}} \in [0,1]^{d-k}$ and $\bm{u}^\ast \coloneqq \frac{d-k-1}{d-k}(1,\ldots,1)^\top\in[0,1]^{d-k}$
\end{corollary}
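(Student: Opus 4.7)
My plan is to reduce the statement to Theorem \ref{mainTheo} by expressing the difference of two $(d-k)$-margins of $C$ as a difference of the form $C(\bm{v}) - C(\bm{v}_\pi)$, which is then bounded by $(d-1)/(d+1)$. Let $A = \{a_1 < \cdots < a_{d-k}\}$ and $B = \{b_1 < \cdots < b_{d-k}\}$ be the index sets of the two marginals. Using the standard representation of a marginal of a copula as $C$ evaluated at a point whose omitted coordinates are set to $1$, I write
\begin{equation*}
  C_{(d-k),a}(\tilde{\bm{u}}) = C(\bm{v}), \qquad C_{(d-k),b}(\tilde{\bm{u}}) = C(\bm{w}),
\end{equation*}
where $\bm{v}$ has $v_{a_j} = \tilde{u}_j$ for $j=1,\ldots,d-k$ and $v_i = 1$ otherwise, and $\bm{w}$ is defined analogously via $B$.

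The key step is to exhibit a permutation $\pi \in S_d$ with $\bm{w} = \bm{v}_\pi$. To this end I set $\pi(b_j) \coloneqq a_j$ for $j=1,\ldots,d-k$ and choose any bijection from $\{1,\ldots,d\}\setminus B$ onto $\{1,\ldots,d\}\setminus A$ to define $\pi$ on the remaining indices; this is possible because both complements have cardinality $k$. A quick case check for $i \in B$ and for $i \notin B$ confirms that $w_i = v_{\pi(i)}$ in both situations, so $\bm{w} = \bm{v}_\pi$ as desired. Applying Theorem \ref{mainTheo} to $C$, $\bm{v}$, and $\pi$ then yields the first inequality $\lvert C_{(d-k),a}(\tilde{\bm{u}}) - C_{(d-k),b}(\tilde{\bm{u}}) \rvert \leq \frac{d-1}{d+1}$.

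It remains to verify the strict inequality $\frac{d-1}{d+1} < \frac{d-k-1}{d-k}$ together with the identity $\frac{d-k-1}{d-k} = M_{d-k}(\bm{u}^\ast) - W_{d-k}(\bm{u}^\ast)$. Cross-multiplying (both denominators are positive) and simplifying reduces the strict inequality to $2k < d-1$, which is exactly the hypothesis $k < \frac{d-1}{2}$; and for $\bm{u}^\ast = \frac{d-k-1}{d-k}(1,\ldots,1)^\top$ one directly obtains $M_{d-k}(\bm{u}^\ast) = \frac{d-k-1}{d-k}$ and $W_{d-k}(\bm{u}^\ast) = 0$. The only non-routine step in the entire argument is the construction and verification of the permutation $\pi$; everything else is index bookkeeping or elementary arithmetic.
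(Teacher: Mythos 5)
Your proposal is correct and follows essentially the same route as the paper: represent each $(d-k)$-margin as $C$ evaluated at a $d$-vector with $k$ coordinates set to $1$, observe that the two vectors are permutations of each other, and invoke Theorem \ref{mainTheo}; the paper merely asserts the existence of the permutation, whereas you construct it explicitly. The remaining arithmetic (the equivalence of the strict inequality with $2k<d-1$ and the evaluation of the Fr\'echet--Hoeffding bounds at $\bm{u}^\ast$) matches what the paper leaves as ``straightforward to compute.''
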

By $M_{d-k}$ we denote the upper $(d-k)$-dimensional Fr\'echet-Hoeffding-bound, and by $W_{d-k}$ a $(d-k)$-copula which coincides with the lower $(d-k)$-dimensional Fr\'echet-Hoeffding-bound in $\bm{u}^\ast$. Note, that Corollary \ref{raenderCor} is still correct for $d=3$, but gives no information.%Hier noch Referenz auf Sklar, dass so eine Copula exisitiert und evtl. dass die Schranke selbst keine Copula ist. -> steht weiter oben
\begin{proof}
	As $C_{(d-k),a}$  and $C_{(d-k),b}$ are margins of $C$, for a fixed $\tilde{\bm{u}} \in [0,1]^{d-k}$ there exist 
	%indices $i_1, \ldots, i_k$ and $j_1, \ldots, j_k$, such that
%	\begin{align*}
%		 C_{(d-k),a}(\tilde{\bm{u}})&=C(\overbrace{\tilde{u}_1,\ldots,\tilde{u}_{i_1-1},1,\tilde{u}_{i_1+1},\ldots}^{\eqqcolon\bm{u}_a}) \qquad \text{and}\\
%		 C_{(d-k),b}(\tilde{\bm{u}})&=C(\underbrace{\tilde{u}_1,\ldots,\tilde{u}_{j_1-1},1,\tilde{u}_{j_1+1},\ldots}_{\eqqcolon\bm{u}_b})
%	\end{align*}
%	hold. 
	$\bm{u}_a, \bm{u}_b \in [0,1]^d$ with exactly $k$ components equal to $1$, such that
	\begin{equation*}
		 C_{(d-k),a}(\tilde{\bm{u}}) = C(\bm{u}_a) \text{ and } C_{(d-k),b}(\tilde{\bm{u}}) = C(\bm{u}_b)\text{.}
	\end{equation*}
	These two $d$-dimensional vectors $\bm{u}_a$ and $\bm{u}_b$ are the same, up to the order of their components. Therefore, there exists a permutation $\pi \in S_d$ such that $\bm{u}_a=(\bm{u}_{b})_\pi$ and
	\begin{equation*}
		\lvert C_{(d-k),a}(\tilde{\bm{u}})-C_{(d-k),b}(\tilde{\bm{u}})\rvert =\lvert C(\bm{u}_a)-C\bigl((\bm{u}_a)_\pi\bigr)\rvert \leq \frac{d-1}{d+1} \text{.}
	\end{equation*}
	The other equations are straightforward to compute.
\end{proof}

%
%%%%%%%%%% Section %%%%%%%%%%
%
\section{Proof of the main result}
Before proving Theorem \ref{mainTheo} we first state some auxiliary results needed in the proof. By $\tau_{ij}$ we denote the transposition of $i$ and $j$,
i.e. the permutation interchanging components $i$ and $j$ and leaving the others unchanged.
\begin{lemma}\label{transpLemma}
	Let $\bm{u}\in [0,1]^d$, let $i,j \in \{1,\ldots,d\}$, then
	\begin{equation*}
		\lvert C(\bm{u}) - C(\bm{u}_{\tau_{ij}}) \rvert \leq \lvert u_i - u_j\rvert
	\end{equation*}
	holds for any $d$-copula $C$.
\end{lemma}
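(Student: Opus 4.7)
The plan is to exploit the rigid structure of the two vectors $\bm{u}$ and $\bm{u}_{\tau_{ij}}$: they agree in every coordinate outside $\{i,j\}$, and in these two positions they merely swap $u_i$ with $u_j$. The obvious first attempt would be to invoke the standard $d$-dimensional Lipschitz estimate $|C(\bm{u})-C(\bm{v})| \leq \sum_{k=1}^d |u_k - v_k|$, but applied to $\bm{v}=\bm{u}_{\tau_{ij}}$ this yields only $2|u_i-u_j|$. So the real work is to save the factor of two by using the coordinatewise monotonicity of a copula more carefully.

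To this end I would, without loss of generality, assume $u_i \leq u_j$ (both sides of the claimed inequality are symmetric in $i$ and $j$) and introduce the auxiliary point $\bm{a}\in [0,1]^d$ that coincides with $\bm{u}$ in every coordinate except $j$, where its entry is lowered from $u_j$ to $u_i$. Since $u_i$ now appears in both positions $i$ and $j$, the very same point $\bm{a}$ arises from $\bm{u}_{\tau_{ij}}$ by lowering instead the $i$-th coordinate from $u_j$ to $u_i$. Thus $\bm{u}$ and $\bm{u}_{\tau_{ij}}$ each differ from $\bm{a}$ in exactly one coordinate, and in both cases the change is an increase of size $u_j - u_i$.

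Applying the coordinatewise monotonicity and $1$-Lipschitz property of a copula (a standard consequence of the defining axioms) to each of these one-coordinate perturbations gives
$$0 \leq C(\bm{u})-C(\bm{a}) \leq u_j - u_i \quad \text{and} \quad 0 \leq C(\bm{u}_{\tau_{ij}}) - C(\bm{a}) \leq u_j - u_i,$$
so that both $C(\bm{u})$ and $C(\bm{u}_{\tau_{ij}})$ lie in the common interval $\bigl[C(\bm{a}),\,C(\bm{a})+(u_j-u_i)\bigr]$. The bound $|C(\bm{u})-C(\bm{u}_{\tau_{ij}})| \leq u_j - u_i = |u_i-u_j|$ then follows at once. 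The only conceptual obstacle here is recognising that sandwiching both values against this common anchor $\bm{a}$ (built by forcing the smaller of the two swapped entries into both positions) is what removes the spurious factor of two; once $\bm{a}$ is in hand, the argument is a direct one-line consequence of the properties of a copula.
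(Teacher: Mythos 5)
Your proof is correct and follows essentially the same strategy as the paper: the paper anchors both values at the point $\bm{v}$ with $v_i=v_j=\max\{u_i,u_j\}$ and sandwiches $C(\bm{u})$ and $C(\bm{u}_{\tau_{ij}})$ in $[C(\bm{v})-|u_i-u_j|,\,C(\bm{v})]$ via monotonicity and the Lipschitz property, while you use the mirror-image anchor with $\min\{u_i,u_j\}$ in both positions. The difference is purely cosmetic.
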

\begin{proof}
	Let $C$ be a $d$-copula, $\bm{u} \in [0,1]^d$ and $i,j \in \{1,\ldots,d\}$. %W.l.o.g.\ let $i=1$, $j=2$ and $u_i \leq u_j$.
	%W.\,l.\,o.\,g.\ we may assume $i<j$. 
	Now define $\bm{v}$ by
	\begin{equation*}
		v_k \coloneqq \max\{u_k, u_{\tau_{ij}(k)}\}, \quad k=1,\ldots,d
	\end{equation*}
	which implies $v_k=u_k$ for $k \ne i,j$. Due to the monotonicity of $C$ we get 
	\begin{equation}\label{transpEq1}
		C(\bm{u})\leq C(\bm{v}), \quad C(\bm{u}_{\tau_{ij}})\leq C(\bm{v})\text{.}
	\end{equation}
	$C$ being Lipschitz-continuous (see e.\,g.\ \citet{nelsen}) yields
	\begin{equation}\label{transpEq2}
		\lvert C(\bm{v}) - C(\bm{u}) \rvert \leq \sum_{k=1}^d \lvert v_k -u_k \rvert = \lvert v_i - u_i \rvert + \lvert v_j - u_j\rvert
	\end{equation}
	where the last equation is due to the choice of $\bm{v}$. As $v_i=v_j=\max\{u_i,u_j\}$ either $ \lvert v_i - u_i \rvert$ or  $\lvert v_j - u_j \rvert$ vanishes. Together with \eqref{transpEq1} we conclude 
	\begin{equation*}
		C(\bm{u}) \in \bigl[ C(\bm{v})-\lvert u_i-u_j \rvert, C(\bm{v}) \bigr]\text{.}
	\end{equation*}
	By replacing $\bm{u}$ in \eqref{transpEq2} by $\bm{u}_{\tau_{ij}}$, it is easy to see, that $C(\bm{u}_{\tau_{ij}})$ is within the same interval, which completes the proof.
\end{proof}

In the next lemma, we  will show that the upper inequality in Theorem \ref{mainTheo} holds. For the proof we need the following example of special permutations.

\begin{example}\label{permEx}
	Let $\bm{u}\in [0,1]^d$ and $\pi \in S_d$. Note that in this example each transposition might be the identity mapping. Let $\tau_d$ be the transposition, which exchanges $d$ and ${\pi(d)}$. Thus, $\tau_d$ puts $u_d$ in the right place. Now let $\tau_{d-1}$ be the transposition, which puts $u_{d-1}$ in $\bm{u}_{\tau_d}$ in the right place. If $(d-1)$ wasn't concerned by $\tau_d$ (i.e. $\tau_d(d-1) = d-1$), then $\tau_{d-1}$ is the transposition which exchanges $(d-1)$ and $\pi(d-1)$ (note that $\pi(d-1)\not=\pi(d)$, so $u_d$ remains untouched). Otherwise, $\tau_d(d) = d-1$ and then $\tau_{d-1}(d-1)=d-1$ and, even more important $\tau_{d-1}(d) = \pi(d-1)$. Now, we have $u_d$ and $u_{d-1}$ in the right places, i.e. on the same positions in $\bm{u}_\pi$ and $\bm{u}_{\tau_{d-1} \circ \tau_d}$. Like this, we can go on, until $\tau_2$ finally puts $u_2$ into its place. We needn't worry about $u_1$, because when $u_2, \ldots, u_d$ are all on their places, then $u_1$ has to be taken care of as well. In a nutshell, $\pi$ can be replaced by the composition of at most $d-1$ transpositions (for more details see e.\,g.\ \citet[p.\ 107]{DummitFoote}).
	
	Let's have a look at a concrete example, namely $\pi: (1,2,3,4) \mapsto (3,2,4,1)$. Now, one way to generate $\pi$ is by $\pi = \tau_2 \circ \tau_3 \circ \tau_4$, where the transpositions $\tau_j$ are characterized by
	\begin{equation*}
		\tau_4 = (34) \qquad  \tau_3=(14) \quad \text{ and } \quad \tau_2= \mathrm{id} \text{.}
	\end{equation*}
	In this case, as $\tau_2 = \mathrm{id}$, even two transpositions suffice to generate $\pi=(143)$.
\end{example}

\begin{lemma}\label{upperBndLemma}
	Let $\bm{u}\in [0,1]^d$, let $\pi \in S_d$, then
	\begin{equation}
		\lvert C(\bm{u}) - C(\bm{u}_{\pi}) \rvert \leq \frac{d-1}{d+1}
	\end{equation}
	holds for any $d$-copula $C$.
\end{lemma}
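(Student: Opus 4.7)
The plan is to combine the transposition bound of Lemma \ref{transpLemma} with the Fr\'echet--Hoeffding bounds and then to optimise over $\bm{u}$. First I would use Example \ref{permEx} to write $\pi$ as a composition of at most $d-1$ transpositions, producing a chain $\bm{v}_0=\bm{u},\bm{v}_1,\ldots,\bm{v}_m=\bm{u}_\pi$ in which consecutive vectors differ only by swapping two positions. Applying Lemma \ref{transpLemma} at each step and the triangle inequality then yields
\begin{equation*}
|C(\bm{u})-C(\bm{u}_\pi)|\leq\sum_{k=1}^{m}|u_{\alpha_k}-u_{\beta_k}|\eqqcolon S(\bm{u},\pi),
\end{equation*}
where $u_{\alpha_k},u_{\beta_k}$ are the two components of $\bm{u}$ whose positions are exchanged at the $k$th step.

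Second, since $C$ is a copula and both $M_d$ and $W_d$ are symmetric in their arguments, so that $M_d(\bm{u}_\pi)=M_d(\bm{u})$ and $W_d(\bm{u}_\pi)=W_d(\bm{u})$, the Fr\'echet--Hoeffding bounds give
\begin{equation*}
|C(\bm{u})-C(\bm{u}_\pi)|\leq M_d(\bm{u})-W_d(\bm{u})\eqqcolon F(\bm{u}).
\end{equation*}
Taking the minimum of the two estimates reduces Lemma \ref{upperBndLemma} to the deterministic inequality $\min\bigl(S(\bm{u},\pi),F(\bm{u})\bigr)\leq (d-1)/(d+1)$ for every $\bm{u}\in[0,1]^d$ and every $\pi\in S_d$.

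Assume, without loss of generality (by relabelling components), that $u_1\leq\dots\leq u_d$. Two regimes appear naturally. If $u_1\leq (d-1)/(d+1)$, then $F(\bm{u})\leq u_1\leq (d-1)/(d+1)$ and the claim is immediate. Otherwise every component lies in $\bigl((d-1)/(d+1),1\bigr]$, a cube of edge length $2/(d+1)$, and each summand satisfies $|u_{\alpha_k}-u_{\beta_k}|\leq u_d-u_1<2/(d+1)$. Coupling this with the Fr\'echet constraint $\sum_i u_i\leq d-1$ (the complementary case $\sum_i u_i>d-1$ is handled by the involution $\bm{u}\mapsto\bm{1}-\bm{u}$, which swaps $M_d$ and $W_d$ and preserves $S$) should force $S(\bm{u},\pi)\leq (d-1)/(d+1)$.

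The hard part will be this final optimisation. The sum $S(\bm{u},\pi)$ depends on the precise chain produced by Example \ref{permEx}, and a given component $u_i$ can enter several summands, so the bound cannot be obtained by handling each $|u_{\alpha_k}-u_{\beta_k}|$ in isolation. One has to exploit $\sum_i u_i\leq d-1$ globally, estimating the number of times a fixed index appears in the decomposition of a generic $\pi$. I would expect the parity split hinted at in Remark~ii to surface here through the parity of the number of non-trivial transpositions produced by the construction in Example \ref{permEx}, determining whether the extremal constellation is unique or lies on a lower-dimensional manifold.
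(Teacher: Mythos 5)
Your setup coincides with the paper's---the chain of transpositions from Example \ref{permEx} controlled by Lemma \ref{transpLemma}, the Fr\'echet--Hoeffding estimate, and the easy case $u_1\leq\frac{d-1}{d+1}$---but the proof stops exactly where the work begins, and the plan you sketch for the remaining case does not go through. Two concrete problems. First, bounding each summand of $S(\bm{u},\pi)$ by $u_d-u_1<\frac{2}{d+1}$ only yields $S\leq\frac{2(d-1)}{d+1}$, which is off by a factor of two. What is needed is the refined estimate $S(\bm{u},\pi)\leq\sum_{i=2}^d(u_i-u_1)$, and this comes from the structure of the particular decomposition in Example \ref{permEx}: assuming $u_1\leq\ldots\leq u_d$, the transposition $\tau_k$ swaps $u_k$ with some $u_j$, $j<k$ (the components $u_{k+1},\ldots,u_d$ are already in their final positions and are never touched again), so its cost is $u_k-u_j\leq u_k-u_1$, and the ``large'' indices $k=2,\ldots,d$ are pairwise distinct along the chain; per-summand bounds in isolation, which you say cannot work, in fact do work once organised this way. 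Second, the proposed reduction of the case $\sum_iu_i>d-1$ to $\sum_iu_i\leq d-1$ via $\bm{u}\mapsto\bm{1}-\bm{u}$ is broken: this involution preserves $S$ but not $F=M_d-W_d$ (for $d=3$ and $\bm{u}=(0.8,0.8,0.8)^\top$ one has $F(\bm{u})=0.4$ but $F(\bm{1}-\bm{u})=0.2$), and it maps the region where all coordinates exceed $\frac{d-1}{d+1}$ out of itself, so knowing $\min(S,F)\leq\frac{d-1}{d+1}$ at $\bm{1}-\bm{u}$ gives nothing at $\bm{u}$.

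The missing idea is that no optimisation and no case split on $\sum_iu_i$ is needed, because the two bounds are exactly complementary. Writing $\tilde{u}_i\coloneqq u_i-\frac{d-1}{d+1}\geq0$, the refined transposition bound reads $S\leq\sum_{i=2}^d\tilde{u}_i$, while $F\leq u_1-\bigl(\sum_{i=1}^du_i-(d-1)\bigr)=2\tfrac{d-1}{d+1}-\sum_{i=2}^d\tilde{u}_i$, using the linear lower bound $\sum_iu_i-d+1\leq W_d(\bm{u})$ whether or not it is positive. The two right-hand sides sum to $2\tfrac{d-1}{d+1}$, so their minimum is at most $\tfrac{d-1}{d+1}$ identically. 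Finally, the parity of $d$ plays no role in this lemma; it only enters when one asks \emph{where} the bound can be attained (Lemma \ref{oddLemma}), so chasing it here is a dead end.
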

\begin{proof}
	Let $C$ be a $d$-copula. W.l.o.g.\ let $u_1\leq\ldots\leq u_d$, otherwise we replace $C$ in the proof by $\tilde{C}$ with $\tilde{C}(\bm{v})\coloneqq C(\bm{v}_{\sigma^{-1}})$ for all $\bm{v} \in [0,1]^d$. Here $\sigma \in S_d$ is the permutation which orders the components of $\bm{u}$ by size, i.e.\ $\bm{u}_\sigma = (u_{(1)},\ldots,u_{(d)})^\top$.
	
	If there exists at least one $i \in \{1,\ldots,d\}$ with $u_i <  \frac{d-1}{d+1}$ the claim follows immediately by
	\begin{equation*}
		\lvert C(\bm{u}) - C(\bm{u}_{\pi}) \rvert \leq \max\bigl\{ C(\bm{u}), C(\bm{u}_\pi)\bigr\} \leq M(\bm{u}) \le u_i <  \frac{d-1}{d+1} \text{.}
	\end{equation*}
	Hence we may assume now that $\frac{d-1}{d+1}\leq u_1$. In the following, we write $\tilde{u}_i \coloneqq u_i - \frac{d-1}{d+1}$, so we have $0 \leq \tilde{u}_i \leq \frac{2}{d+1}$. The permutation $\pi$ is generated by at most $(d-1)$ transpositions (as described in Example \ref{permEx}, see also \citet{DummitFoote}), therefore, we are able to write $\pi = \tau_2 \circ  \ldots \circ \tau_{d-1}  \circ \tau_d$. Next we use the triangular inequality to derive
	\begin{equation}\label{upperBndInEq}\begin{split}
		\lefteqn{ \lvert C(\bm{u}) - C(\bm{u}_{\pi}) \rvert \leq} \\ & \leq \lvert C(\bm{u}) - C(\bm{u}_{\tau_d}) \rvert + \lvert C(\bm{u}_{\tau_d}) - C(\bm{u}_{\tau_{d-1}\circ\tau_d}) \rvert + \ldots + \lvert C(\bm{u}_{\tau_3\circ\ldots\circ\tau_d}) - C(\bm{u}_{\pi})\rvert \\
		&\leq \sum_{i=2}^d (u_i - u_1) \leq \sum_{i=2}^d \tilde{u}_i
	\end{split}
\end{equation}
	where the second inequality follows from Lemma \ref{transpLemma}.
	
	At the same time, we have
	\begin{equation}\label{upperBndInEq2}
		\begin{split}
		\lvert C(\bm{u}) - C(\bm{u}_{\pi}) \rvert &\leq M_d(\bm{u})-W_d(\bm{u}) \\ & \leq u_1 - \Bigl(\sum_{i=1}^d u_i - (d-1)\Bigr) = 2\frac{d-1}{d+1} -\sum_{i=2}^d\tilde{u}_i
		\end{split}
	\end{equation}
	with the Fr\'echet-Hoeffding-bounds $M_d$ and $W_d$ (see \citet{nelsen}). %Hier vielleicht noch Referenz auf Original-Artikel.
	Therefore, we may conclude that
	\begin{equation*}
		\lvert C(\bm{u}) - C(\bm{u}_{\pi}) \rvert \leq \min\biggl\{ \sum_{i=2}^d \tilde{u}_i,2\frac{d-1}{d+1} -\sum_{i=2}^d\tilde{u}_i\biggr\} \leq \frac{d-1}{d+1}
	\end{equation*}
	which completes the proof.
\end{proof}

In the proof of Lemma \ref{upperBndLemma} we need $u_1\leq\ldots\leq u_d$ just for notational convenience. Therefore, it is straightforward to derive the following corollary:
\begin{corollary}
	With the prerequisites of Lemma \ref{upperBndLemma}% and $u_{(1)} \coloneqq \min\{u_1,\ldots,u_d\}$,
	\begin{equation*}
		\lvert C(\bm{u}) - C(\bm{u}_{\pi}) \rvert \leq \min\biggl\{u_1,\ldots,u_d, \sum_{i=1}^d (u_i-u_{(1)}), (d-1)+u_{(1)} -  \sum_{i=1}^d u_i \biggr\}
	\end{equation*}
	holds for any $d$-copula $C$ (where $u_{(1)} \coloneqq \min\{u_1,\ldots,u_d\}$).
\end{corollary}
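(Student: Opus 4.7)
The plan is to retrace the proof of Lemma \ref{upperBndLemma} without the WLOG sorting and to read off each of the three families of bounds appearing in the corollary as an intermediate estimate there. The key observation is that the quantities $u_{(1)}$, $\sum_{i=1}^d u_i$ and $\sum_{i=1}^d (u_i - u_{(1)})$ are all invariant under permutations of the components of $\bm{u}$; hence the device from Lemma \ref{upperBndLemma} of replacing $C$ by $\tilde{C}(\bm{v}) \coloneqq C(\bm{v}_{\sigma^{-1}})$ in order to arrange $u_1 \leq \ldots \leq u_d$ carries over unchanged: any bound established for the sorted configuration and expressed in terms of these invariants is automatically valid for arbitrary $\bm{u}$.

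For the $u_i$ entries I would use monotonicity of $C$ and the upper Fr\'echet-Hoeffding bound, writing
\[
|C(\bm{u}) - C(\bm{u}_\pi)| \leq \max\{C(\bm{u}), C(\bm{u}_\pi)\} \leq M_d(\bm{u}) = u_{(1)} \leq u_i
\]
for each $i \in \{1,\ldots,d\}$. For the bound $\sum_{i=1}^d (u_i - u_{(1)})$, I would invoke the decomposition of $\pi$ from Example \ref{permEx} into at most $d-1$ transpositions and apply the triangle inequality together with Lemma \ref{transpLemma}, exactly as in \eqref{upperBndInEq}, to get $|C(\bm{u}) - C(\bm{u}_\pi)| \leq \sum_{i=2}^d (u_{(i)} - u_{(1)})$, which equals $\sum_{i=1}^d (u_i - u_{(1)})$ once the vanishing $i=1$ term is included. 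For the last entry I would apply the Fr\'echet-Hoeffding spread $|C(\bm{u}) - C(\bm{u}_\pi)| \leq M_d(\bm{u}) - W_d(\bm{u})$, together with $W_d(\bm{u}) \geq \sum_{i=1}^d u_i - (d-1)$ as in \eqref{upperBndInEq2}, to arrive at $(d-1) + u_{(1)} - \sum_{i=1}^d u_i$.

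The only subtle point I anticipate is that this last expression can exceed $u_{(1)}$ when $\sum_{i=1}^d u_i < d-1$, because in that regime $W_d(\bm{u}) = 0$ rather than $\sum u_i - (d-1)$; however, $u_{(1)}$ is itself already one of the terms in the minimum and is then the sharper estimate, so nothing is lost. Taking the minimum over the three families of bounds then yields the claim. Beyond this minor case-check I do not expect any real obstacle, since the corollary is essentially a bookkeeping rearrangement of the intermediate estimates already produced inside the proof of Lemma \ref{upperBndLemma}.
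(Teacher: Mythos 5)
Your proposal is correct and matches the paper's intent exactly: the paper justifies this corollary with the single remark that the ordering $u_1\leq\ldots\leq u_d$ in Lemma \ref{upperBndLemma} is only notational, and your argument simply spells out the three intermediate estimates ($M_d(\bm{u})\le u_i$, the telescoping transposition bound, and the Fr\'echet--Hoeffding spread) in permutation-invariant form. The side remark about $W_d(\bm{u})=0$ when $\sum_i u_i<d-1$ is handled correctly and poses no obstacle.
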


By now, we established the upper inequality in Theorem \ref{mainTheo}. In order to prove that it cannot be improved, we have to find a proper $d$-copula, for which the bound in \eqref{mainTheoEq} is attained in some point $\bm{u} \in [0,1]^d$ and for some permutation $\pi \in S_d$. %We use the shuffle-of-min construction presented by \cite{Mikusinski1992} \cite{Mikusinski2009}.
To this end let $\bm{u}^\ast \in [0,1]^d$ such that
\begin{equation}\label{uStern}
	u_j^\ast \coloneqq \begin{cases}
		\frac{d-1}{d+1} & \text{for $1 \leq j \leq \frac{d+1}{2}$} \\
		%\frac{d-1}{d+1} &\text{for $j=\frac{d+1}{2}$ and $d$ odd} \\
		\frac{d}{d+1} &\text{for $j=\frac{d}{2}+1$ and $d$ even} \\
		1 & \text{otherwise}
	\end{cases}
\end{equation}
for $j \in \{1,\ldots,d\}$. In the following we consider the mapping $C^\ast : [0,1]^d \to \R$ with
\begin{equation}\label{CopBsp}
	C^\ast(\bm{u}) \coloneqq \sum_{j=0}^{d-1} \bigwedge_{k=0}^{d-1} \Bigl( u_{((j+k)\, \mathrm{mod}\,  d) + 1} - \!\!\!\!\!\!\sum_{i=1, i\not\in I(j,k)}^d \!\!\!\!\!\! (1-u_i^\ast) \Bigr)^+
\end{equation}
where $I(j,k) \coloneqq \bigl\{((j+l)\, \mathrm{mod}\,  d) + 1: l=0,1,\ldots,k\bigr\}$ and $\bigwedge_{i=1}^d a_i \coloneqq \min\{a_1,\ldots,a_d\}$.
A small calculation shows that in the case $d=2$ this copula satisfies $C^*(u_1,u_2)=\min\{u_1,u_2, (u_1-1/3)^++(u_2-2/3)^+\}$ as discussed above.
\begin{lemma}\label{BspLemma}
	Let $C^\ast$ be the mapping defined in \eqref{CopBsp} and $\bm{u}^\ast \in [0,1]^d$ as in \eqref{uStern}. Let $\pi \in S_d$ the order reversing permutation, i.e. $\pi(k)\coloneqq d-k+1$, then $C^\ast (\bm{u}^\ast)=0$ and $C^\ast (\bm{u}_\pi^\ast)=\frac{d-1}{d+1}$.
\end{lemma}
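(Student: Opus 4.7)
My plan is a direct evaluation of formula \eqref{CopBsp} at both points.

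For the first equality $C^\ast(\bm{u}^\ast) = 0$, I would first check that $\sum_{i=1}^d(1-u_i^\ast) = 1$ by summing the three types of values in \eqref{uStern} (in both parity cases this yields $\sum u_i^\ast = d-1$). Then for every $j$, the $k=0$ term in \eqref{CopBsp} evaluated at $\bm{u}^\ast$ equals
\[ \bigl(u_{j+1}^\ast - \sum_{i \neq j+1}(1-u_i^\ast)\bigr)^+ = \bigl(u_{j+1}^\ast - (1 - (1-u_{j+1}^\ast))\bigr)^+ = 0.\]
Since the inner minimum in \eqref{CopBsp} is a minimum of non-negative quantities one of which is zero, it vanishes for every $j$, so summing over $j$ gives $C^\ast(\bm{u}^\ast) = 0$.

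For the equality $C^\ast(\bm{u}^\ast_\pi) = (d-1)/(d+1)$ I would evaluate at $\bm{u}^\ast_\pi = (u_d^\ast,\ldots,u_1^\ast)^\top$ and treat odd and even $d$ separately. Setting $m = \lfloor d/2 \rfloor$, the first observation is that for $j \geq m$ the $k=0$ term at $\bm{u}^\ast_\pi$ equals $(u_{d-j}^\ast - u_{j+1}^\ast)^+$, which vanishes because $u_{d-j}^\ast \leq u_{j+1}^\ast$ in that range, so the inner minimum for such $j$ is zero. The substantive task is to pin down the inner minimum for each $j < m$. Writing $\tilde u_i^\ast := 1 - u_i^\ast$ and $c(j,k) := \lvert I(j,k) \cap \{i : u_i^\ast < 1\}\rvert$, the ``slack'' $\sum_{i \notin I(j,k)} \tilde u_i^\ast$ decreases in a controlled way as $k$ grows, while $(\bm{u}^\ast_\pi)_{n(j,k)}$ starts at $1$, drops to the small value $(d-1)/(d+1)$ once $n(j,k)$ enters the tail, and returns to $1$ after the wrap-around. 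Tracing the term across these three regimes, one shows that the inner minimum is attained at $k=0$ and equals $2/(d+1)$ for odd $d$ and every $j \in \{0,\ldots,m-1\}$; in the even case it equals $2/(d+1)$ for $j \in \{0,\ldots,m-2\}$ and $1/(d+1)$ for $j = m-1$, where the latter value arises from the intermediate component $u^\ast_{m+1} = d/(d+1)$. Summing over $j$ yields $2m/(d+1) = (d-1)/(d+1)$ when $d$ is odd, and $(m-1)\cdot 2/(d+1) + 1/(d+1) = (d-1)/(d+1)$ when $d$ is even.

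The main obstacle is verifying that for each relevant $j$ the inner minimum is truly achieved at $k=0$, i.e.\ that none of the other $d-1$ terms drops below $2/(d+1)$ (respectively $1/(d+1)$ in the even $j = m-1$ case). This requires careful bookkeeping of the cyclic structure of $I(j,k)$ together with the positions at which $(\bm{u}^\ast_\pi)_{n(j,k)}$ changes value, and checking that the simultaneous shrinkage of the slack compensates the drop in the leading term across each regime change. The even case is slightly more delicate, since the intermediate value $d/(d+1)$ at position $m+1$ breaks the symmetry of the odd case and must be handled separately to recover the correct $1/(d+1)$ contribution at $j = m-1$.
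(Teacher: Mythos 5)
Your proof of $C^\ast(\bm{u}^\ast)=0$ is complete and coincides with the paper's argument: since $\sum_{i\neq j+1}(1-u_i^\ast)=u_{j+1}^\ast$, the $k=0$ term vanishes for every $j$, and an inner minimum of nonnegative quantities containing a zero is zero. Your treatment of $j\geq m$ in the second equality is likewise fine (the $k=0$ term is already zero there), and your final arithmetic (the values $2/(d+1)$, the extra $1/(d+1)$ at $j=m-1$ in the even case, and the totals) is correct. The problem is that the one statement carrying all the technical weight for $j<m$ --- that the inner minimum over $k$ is attained at $k=0$ --- is asserted rather than proved. You explicitly flag it as ``the main obstacle'' and describe what verifying it would involve (tracing the term through three regimes of the cyclic index), but you do not carry out that verification, so the proof is incomplete at precisely the point where the content lies.

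The paper closes this gap with a local monotonicity argument that avoids the regime-tracing entirely: writing $m_{j,k}$ for the $k$-th term of the $j$-th inner minimum evaluated at $\bm{u}^\ast_\pi$, it shows $m_{j,k}\leq m_{j,k+1}$ for all $j$ and all $0\leq k\leq d-2$. The increment $m_{j,k+1}-m_{j,k}$ equals $\beta_{j,k}-\alpha_{j,k}$, where $\alpha_{j,k}$ is the difference between two consecutive components of $\bm{u}^\ast$ (hence zero except at the wrap-around and at the one or two ``steps'' of $\bm{u}^\ast$) and $\beta_{j,k}=1-u^\ast_{((j+k+1)\bmod d)+1}\geq 0$ is the slack released by the newly absorbed index. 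Only two (odd $d$) or three (even $d$) cases then need checking, and in each the cyclic bookkeeping forces the index appearing in $\beta_{j,k}$ to be the mirror $d-n+1$ of the index $n$ where the step occurs, giving $\alpha_{j,k}\leq\beta_{j,k}$ with equality exactly in the critical case. If you replace your three-regime trace by this single inequality, the minimum sits at $k=0$ for every $j$ (not just $j<m$), and your summation finishes the argument.
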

\begin{proof}
	First, note that $\sum_{i=1}^d u_i^\ast = d-1$ by the choice of $\bm{u}^\ast$ and therefore, $\sum_{i=1}^d (1- u_i^\ast) = 1$. Now for the first claim: let $j \in \{0,\ldots,d-1\}$ and $k\coloneqq0$. Thus $I(j,k)=I(j,0) = \{j+1\}$ and because of
	\begin{equation*}
		\sum_{i=1, i\not\in I(j,0)}^d \!\!\!\!\!\! (1-u_i^\ast) = \sum_{i=1}^d(1-u_i^\ast) - (1-u_{j+1}^\ast) = u_{j+1}^\ast
	\end{equation*}
	we get
	\begin{equation*}
		\Bigl( u_{((j+k)\, \mathrm{mod}\,  d) + 1}^\ast - \!\!\!\!\!\!\sum_{i=1, i\not\in I(j,k)}^d \!\!\!\!\!\! (1-u_i^\ast) \Bigr)^+ = (u_{j+1}^\ast-u_{j+1}^\ast)^+=0
	\end{equation*}
	whenever $k=0$. As this holds for each $j$ we have $C^\ast (\bm{u}^\ast)=0$.
	
	In order to prove the second claim, note that $C^\ast (\bm{u}^\ast)= \sum_{j=0}^{d-1} \bigwedge_{k=0}^{d-1} \bigl(m_{j,k}\bigr)^+$, with
	\begin{equation*}
		m_{j,k} \coloneqq   u_{([(d-1)-(j+k)]\, \mathrm{mod}\,  d) + 1}^\ast - \!\!\!\!\!\!\sum_{i=1, i\not\in I(j,k)}^d \!\!\!\!\!\! (1-u_i^\ast)
	\end{equation*}
	because $d-\bigl(((j+k)\, \mathrm{mod}\,  d) + 1\bigr)+1=\bigl((d-1) - (j+k)\bigr)\, \mathrm{mod}\, d +1$. Now let $j \in \{0,\ldots,d-1\}$ and $0\leq k\leq d-2$. We want to show that $m_{j,k}$ is nondecreasing in $k$, i.e. $m_{j,k}\leq m_{j,k+1}$. This is the case if and only if
	\begin{equation}\label{uSternEq}
		\begin{split}
			\alpha_{j,k} &\coloneqq u_{([(d-1)-(j+k)]\, \mathrm{mod}\,  d) + 1}^\ast - u_{([(d-1)-(j+k+1)]\, \mathrm{mod}\,  d) + 1}^\ast \\ 
			& \qquad \leq 1-u_{((j+k+1)\, \mathrm{mod}\,  d) + 1}^\ast \eqqcolon \beta_{j,k}
		\end{split}
	\end{equation}
	holds. Obviously the left hand side of \eqref{uSternEq} is the difference between consecutive components of $\bm{u}^\ast$, so $\alpha_{j,k}=0$ for most choices of $k$. The cases where $\alpha_{j,k}\not=0$ depend on $d$ being odd or even. If $d$ is even, $\alpha_{j,k}\not=0$ if:
	\begin{enumerate}
		\item\label{ersterFall} $([(d-1)-(j+k)]\, \mathrm{mod}\,  d) + 1=1$. Then $\alpha_{j,k}=u_1^\ast-u_d^\ast < 0 \leq \beta_{j,k}$.
		\item $([(d-1)-(j+k)]\, \mathrm{mod}\,  d) + 1=\frac{d}{2}+2$. In this case $((j+k+1)\, \mathrm{mod}\,  d) + 1=d-(\frac{d}{2}+1)+1=\frac{d}{2}$. Therefore,
		\begin{equation*}
			\alpha_{j,k}=u_{\frac{d}{2}+2}^\ast-u_{\frac{d}{2}+1}^\ast = \frac{1}{d+1} < \frac{2}{d+1} = 1-u_{\frac{d}{2}}^\ast = \beta_{j,k} \text{ .}
		\end{equation*}
		\item $([(d-1)-(j+k)]\, \mathrm{mod}\,  d) + 1=\frac{d}{2}+1$. In this case $((j+k+1)\, \mathrm{mod}\,  d) + 1=d-\frac{d}{2}+1=\frac{d}{2}+1$. Therefore,
		\begin{equation*}
			\alpha_{j,k}=u_{\frac{d}{2}+1}^\ast-u_{\frac{d}{2}}^\ast = \frac{1}{d+1}  = 1-u_{\frac{d}{2}+1}^\ast = \beta_{j,k} \text{ .}
		\end{equation*}
	\end{enumerate}
	If $d$ is odd, $\alpha_{j,k}\not=0$ if:
	\begin{enumerate}
		\item see \ref{ersterFall}. where $d$ is even.
		\item $([(d-1)-(j+k)]\, \mathrm{mod}\,  d) + 1=\frac{d+1}{2}+1$. In this case $((j+k+1)\, \mathrm{mod}\,  d) + 1=d-\frac{d+1}{2}+1=\frac{d+1}{2}$. Therefore,
		\begin{equation*}
			\alpha_{j,k}=u_{\frac{d+3}{2}}^\ast-u_{\frac{d+1}{2}}^\ast = \frac{2}{d+1} = 1- 1-u_{\frac{d+1}{2}}^\ast = \beta_{j,k} \text{ .}
		\end{equation*}
	\end{enumerate}
	So we have $\alpha_{j,k}\leq \beta_{j,k}$ and thus $m_{j,k}\leq m_{j,k+1}$ for all choices of $j$ and $k$. This means the minimum in \eqref{CopBsp} is always achieved for $k=0$ which gives us
	\begin{equation*}
		C^\ast(\bm{u}^\ast)= \sum_{j=0}^{d-1}(m_{j,0})^+ = \sum_{j=0}^{d-1} (u_{d-j}^\ast - u_{j+1}^\ast)^+=\frac{d-1}{d+1}
	\end{equation*}
	as for $j>\frac{d}{2}$ the term $(u_{d-j}^\ast - u_{j+1}^\ast)^+$ is $0$ by the construction of $\bm{u}^\ast$.
\end{proof}
Now we are finally set to prove Theorem \ref{mainTheo}.
\begin{proof}[Proof of Theorem \ref{mainTheo}]
	Let $\pi \in S_d$ and $C$ be a $d$-copula. Then by Lemma \ref{upperBndLemma} we get \eqref{mainTheoEq}. In Lemma \ref{BspLemma} we show that there exists a point $\bm{u}^\ast \in [0,1]^d$ and a mapping $C^\ast:[0,1]^d\to\R$ such that
	\begin{equation*}
		\lvert C^\ast(\bm{u}^\ast) - C^\ast(\bm{u}_\pi^\ast)\rvert = \frac{d-1}{d+1} \text{ .}
	\end{equation*}
	So, all we need to do in order to prove Theorem \ref{mainTheo} is to show that $C^\ast$ is indeed a copula. This is the case, as it can be constructed as a shuffle of min. In two dimensions \citet{mikusinski1992} show that by slicing the unit square vertically (including the mass of the upper Fr\'echet-Hoeffding-bound on the main diagonal) and rearranging it, i.e. shuffling the strips, the resulting mass distribution will yield a proper copula. \citet[section 6]{mikusinski2009} state that this also works for $d>2$ by rearranging $[0,1]^d$ (with the mass on $\{\bm{u} \in [0,1]^d \, | \, u_1=\ldots=u_d\}$). $[0,1]^d$ is separated along hyperplanes of the form $\{u_k = \lambda_k \}$. The separate parts are then rearranged. The resulting shuffle of the original mass distribution corresponds to a proper copula. $C^\ast$ can be obtained this way, by using hyperplanes with $\lambda_k \coloneqq \sum_{i=1}^k (1-u_i^\ast)$. \citet{Durante2010} generalize this concept by applying it to arbitrary copulas. By Remark 2.1. therein, and following their notation, we get a copula $\tilde{C}$ indicated by $\bigl\langle(\mathscr{J}^k)_{k=1}^d, (C_i)_{i=1}^d\bigr\rangle$ where $C_i(\bm{u})\coloneqq M_d(\bm{u})$ for $i=1,\ldots,d$,  and $\mathscr{J}^k = (J_j^k)_{j=1}^d$ with
	\begin{equation}\label{Fallunterscheidung}
		J_j^k \coloneqq \begin{cases}
			\bigl[\sum_{i=1, i\not=j,\ldots,k}^d (1-u_i^\ast) , \sum_{i=1, i\not=j+1,\ldots,k}^d (1-u_i^\ast)\bigr] & \text{if $j<k$, } \\
			\bigl[\sum_{i=1, i\not=k}^d (1-u_i^\ast) , 1\bigr] & \text{if $j=k$, } \\
			\bigl[\sum_{i=k+1}^{j-1} (1-u_i^\ast) , \sum_{i=k+1}^{j} (1-u_i^\ast)\bigr] & \text{if $j>k$, }
		\end{cases}
	\end{equation}
	for $k=1,\ldots,d$. In Proposition 2.2. \citet{Durante2010} give an explicit expression of $\tilde{C}$, namely
	\begin{equation}\label{duranteEq}
		\tilde{C}(\bm{u})=\sum_{j=1}^d \lambda(J_j^1)M_d\biggl(\frac{(u_1-a_j^1)^+}{\lambda(J_j^1)}  , \ldots,\frac{(u_d-a_j^d)^+}{\lambda(J_j^1)}  \biggr)
	\end{equation}
	where $a_j^k$ is the left limit of the interval $J_j^k$. Showing that $\tilde{C}(\bm{u})=C^\ast(\bm{u})$ is just notationally demanding. The sums in \eqref{CopBsp} and in \eqref{Fallunterscheidung} look similar, but in \eqref{CopBsp} we circumvent the distinction of cases by using modular arithmetic. Note that in \eqref{duranteEq}, we write $(u_i-a_j^i)^+$ instead of $u_i-a_j^i$ in Proposition 2.2. in \citet{Durante2010}. But from their proof it is clear that a summand is $0$ whenever $u_i<a_j^i$ for at least one $i \in \{1,\ldots,d\}$.
\end{proof}

%%%%%%%%%% Section %%%%%%%%%%
%
\section{Additional Results}\label{SecAddRes}
As mentioned in Section \ref{SecMainRes}, if we assume $u_1^\ast \leq u_2^\ast$,  \citet{nelsenExtrem} shows that for $d=2$ there is exactly one $\bm{u}^\ast$ (namely $\bm{u}^\ast=\bigl(\frac{1}{3},\frac{2}{3}\bigr)^\top$) for which the maximum in \eqref{mainTheoEq} is attained. For $d>2$, the point $\bm{u}^\ast$, where equality in \eqref{mainTheoEq} holds, is unique if and only if $d$ is odd (assumed $u_i^\ast \leq u_j^\ast$ for $i\leq j$). If $d=2n+2$ ($n\in\N$), then there is a $(\frac{d}{2}-1)$-dimensional manifold $\mathcal{M} \subset [0,1]^d$, such that for all $\bm{u}^\ast \in \mathcal{M}$, there exist a copula $C$ and a permutation $\pi\in S_d$ with $\lvert C(\bm{u}^\ast) - C(\bm{u}^\ast_\pi) \rvert = \frac{d-1}{d+1}$. This is shown in Lemma~\ref{oddLemma}. For the proof we are going to improve the bound in \eqref{upperBndInEq} which was derived in the proof of Lemma~\ref{upperBndLemma}.

\begin{lemma}\label{improvBoundLemma}
	Let $d\geq 2$ and $\bm{u} \in [0,1]^d$ with $u_i\leq u_j$ for $i\leq j$. Then
	\begin{equation}\label{imprvBndEq}
		\lvert C(\bm{u}) - C(\bm{u}_\pi) \rvert \leq \!\! \sum_{i = \left\lceil\frac{d}{2}\right\rceil+1}^d \!\! (u_i - u_1)
	\end{equation}
	holds for any copula $C$ and any permutation $\pi \in S_d$, where $\lceil a \rceil  $ denotes the smallest integer $n \ge a\,.$
\end{lemma}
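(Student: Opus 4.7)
The plan is to extend the auxiliary-vector trick from the proof of Lemma~\ref{transpLemma} from a single transposition to an arbitrary permutation~$\pi$, replacing the telescoping over $d-1$ transpositions used in the proof of Lemma~\ref{upperBndLemma} by one application of the $d$-dimensional Lipschitz property of a copula, and then bounding the resulting combinatorial sum by the right-hand side of~\eqref{imprvBndEq}.

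First, I would introduce $\bm{v} \in [0,1]^d$ defined by $v_k \coloneqq \max\{u_k, u_{\pi(k)}\}$. Then $\bm{v} \geq \bm{u}$ and $\bm{v} \geq \bm{u}_{\pi}$ componentwise, so monotonicity of~$C$ yields $C(\bm{u}) \leq C(\bm{v})$ and $C(\bm{u}_{\pi}) \leq C(\bm{v})$, while Lipschitz continuity of copulas (see \citet{nelsen}) gives
\begin{equation*}
C(\bm{v}) - C(\bm{u}) \leq \sum_{k=1}^d (u_{\pi(k)} - u_k)^+ \quad\text{and}\quad C(\bm{v}) - C(\bm{u}_{\pi}) \leq \sum_{k=1}^d (u_k - u_{\pi(k)})^+.
\end{equation*}
Since $\sum_k u_{\pi(k)} = \sum_k u_k$, the two right-hand sides agree; call their common value $S$. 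Then both $C(\bm{u})$ and $C(\bm{u}_\pi)$ lie in $[C(\bm{v})-S,\,C(\bm{v})]$, and therefore $\lvert C(\bm{u}) - C(\bm{u}_{\pi})\rvert \leq S$, exactly as in the proof of Lemma~\ref{transpLemma}.

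The second step is purely combinatorial. Because $u_1 \le \cdots \le u_d$, the summand $(u_{\pi(k)}-u_k)^+$ is positive only when $\pi(k)>k$. Set $B \coloneqq \{k : \pi(k) > k\}$ and $b \coloneqq |B|$. The condition $\pi(k)>k$ forces $B \subseteq \{1,\ldots,d-1\}$ and $\pi(B)\subseteq\{2,\ldots,d\}$, both of cardinality~$b$. Replacing $\pi(B)$ by the $b$ largest and $B$ by the $b$ smallest indices and using monotonicity of $\bm{u}$ yields
\begin{equation*}
S = \sum_{k \in B} u_{\pi(k)} - \sum_{k \in B} u_k \leq \sum_{i=d-b+1}^d u_i - \sum_{i=1}^b u_i \eqqcolon T_b.
\end{equation*}

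Finally, I would study the function $b\mapsto T_b$. Its increments $T_b - T_{b-1} = u_{d-b+1} - u_b$ are non-increasing in $b$ and change sign near $b=(d+1)/2$, so $T_b$ is concave and attains its maximum at $b=\lceil d/2\rceil$. A short parity-dependent comparison then yields
\begin{equation*}
T_{\lceil d/2\rceil} \leq \sum_{i=\lceil d/2\rceil+1}^d (u_i-u_1),
\end{equation*}
the difference being $\sum_{i=1}^{\lfloor d/2\rfloor}(u_i-u_1)$ for even $d$ and $\sum_{i=1}^{(d-1)/2}(u_i-u_1)$ for odd~$d$, each non-negative by monotonicity of $\bm{u}$. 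Chaining the three estimates proves~\eqref{imprvBndEq}. The only real obstacle is the bookkeeping in this final parity split; the analytic content is exhausted by the $\bm{v}$-trick, which simply reproduces Lemma~\ref{transpLemma} for a general~$\pi$ in one shot rather than via $d-1$ transpositions.
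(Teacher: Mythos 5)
Your proof is correct, and it takes a genuinely different route from the paper's. The paper establishes Lemma \ref{improvBoundLemma} by induction on the number $p$ of non-fixed points of $\pi$, writing $\pi$ as a product of transpositions and running a case analysis (four cases, with sub-cases) on the position of $\pi(i_p)$, each step controlled by Lemma \ref{transpLemma}. You instead generalize the \emph{proof} of Lemma \ref{transpLemma} itself: the max-vector $v_k=\max\{u_k,u_{\pi(k)}\}$ together with monotonicity and Lipschitz continuity yields in one shot the permutation-dependent bound
\begin{equation*}
\lvert C(\bm{u})-C(\bm{u}_\pi)\rvert \;\le\; \sum_{k=1}^d \bigl(u_{\pi(k)}-u_k\bigr)^+ \eqqcolon S,
\end{equation*}
valid for arbitrary $\bm{u}$ and $\pi$; the two one-sided Lipschitz estimates indeed agree because $\sum_k u_{\pi(k)}=\sum_k u_k$. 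The remaining reduction of $S$ to the right-hand side of \eqref{imprvBndEq} is a clean rearrangement argument: with $B=\{k:\pi(k)>k\}$ and $b=\lvert B\rvert$ one gets $S\le T_b\coloneqq\sum_{i=d-b+1}^d u_i-\sum_{i=1}^b u_i$, the increments $T_b-T_{b-1}=u_{d-b+1}-u_b$ are non-increasing and change sign at $b=\lceil d/2\rceil$, and $T_{\lceil d/2\rceil}$ falls short of the right-hand side of \eqref{imprvBndEq} by exactly $\sum_{i=1}^{\lfloor d/2\rfloor}(u_i-u_1)$ (even $d$) resp.\ $\sum_{i=1}^{(d-1)/2}(u_i-u_1)$ (odd $d$), both non-negative; these computations check out. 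Your route avoids the induction and the case-by-case bookkeeping entirely, and the intermediate bound $S$ is sharper than \eqref{imprvBndEq} and would be worth recording as a corollary in its own right (it subsumes both Lemma \ref{transpLemma} and the unnamed corollary after Lemma \ref{upperBndLemma}); the paper's route, by contrast, reuses only the already-established transposition machinery and makes the role of the special $3$-cycle case (Example \ref{improvBoundExample}) explicit.
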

Before the proof of Lemma \ref{improvBoundLemma} for an arbitrary $\pi$, we will give the proof for a special case in the following example.
\begin{example}\label{improvBoundExample}
	Let $d\geq 3$, $\bm{u}$ as in Lemma \ref{improvBoundLemma} and $\pi \in S_d$ such that $\pi(i) \not= i$ for exactly three $i \in \{1,\ldots,d\}$. This means, there are exactly three components $u_{i_1}, u_{ i_2}, u_{i_3} $ in $\bm{u}$, which are permuted in $\bm{u}_\pi$. W.\,l.\,o.\,g.\ we may assume $i_1<i_2<i_3$. As $\pi$ can't be a transposition (otherwise, there is one $k$ with $\pi(i_k)=i_k$), either $\pi$  is a left-shift or a right shift, i.\,e.\ $\pi=\pi_l\coloneqq(i_1i_3i_2)$ or $\pi=\pi_r\coloneqq(i_1i_2i_3)$ (as there are no other derangements in $S_3$). Now let $\tau_1 \coloneqq (i_1i_2)$ and $\tau_2 \coloneqq(i_2i_3)$, then $\pi_l$ and $\pi_r$ are generated by those two transpositions in the following way:
	\begin{equation*}
		\pi_l = \tau_1 \circ \tau_2, \quad \pi_r=\tau_2 \circ \tau_1\text{.}
	\end{equation*}
	So we have
	\begin{align*}
		\lvert C(\bm{u}) - C(\bm{u}_{\pi_l}) \rvert & \leq \lvert C(\bm{u}) - C(\bm{u}_{\tau_2}) \rvert+\lvert C(\bm{u}_{\tau_2}) - C(\bm{u}_{\pi_l}) \rvert \\
		\lvert C(\bm{u}) - C(\bm{u}_{\pi_r}) \rvert & \leq \lvert C(\bm{u}) - C(\bm{u}_{\tau_1}) \rvert+\lvert C(\bm{u}_{\tau_1}) - C(\bm{u}_{\pi_l}) \rvert 
	\end{align*}
	and applying Lemma \ref{transpLemma} yields
	\begin{align*}
		\lvert C(\bm{u}) - C(\bm{u}_{\pi_l}) \rvert & \leq \lvert u_{i_3} - u_{i_2} \rvert+ \lvert u_{i_2}-u_{i_1}\rvert  = \lvert u_{i_3} - u_{i_1} \rvert \leq \lvert u_d - u_1 \rvert \\
		\lvert C(\bm{u}) - C(\bm{u}_{\pi_r}) \rvert & \leq \lvert u_{i_2}-u_{i_1}\rvert + \lvert u_{i_3} - u_{i_2} \rvert = \lvert u_{i_3} - u_{i_1} \rvert \leq \lvert u_d - u_1 \rvert \text{.}
	\end{align*}
	Note that the last equation holds, as $u_1 \leq u_{i_1}\leq u_{i_2} \leq u_{i_3} \leq u_d$ by the prerequisites. Now, in this special case, \eqref{imprvBndEq} follows immediately, as either $\pi=\pi_l$ or $\pi=\pi_r$.
\end{example}
For more information on generating permutations by transpositions, see e.\,g.\ \citet{DummitFoote}. We will make use of Example \ref{improvBoundExample} in the following proof of Lemma \ref{improvBoundLemma}.
\begin{proof}
	Let $d\geq 2$, $\bm{u} \in [0,1]^d$ with $u_i\leq u_j$ for $i\leq j$ and $\pi \in S_d$. We will need $p\in \N$, defined by
	\begin{equation*}
		p \coloneqq \lvert \{1\leq i \leq d: \pi(i) \not= i\} \rvert
	\end{equation*}
	i.\,e.\ $p$ is the number of elements of $\{1,\ldots,d\}$, which are no fixed points of $\pi$. Note, that for $p=0$, there is nothing to show and $p=1$ is impossible. Therefore, we may assume $p\geq 2$ and have $p$ indices $1 \leq i_1 < \ldots <i_p \leq d$ with $\pi(i_k) \not= i_k$ for $k \in \{1,\ldots,p\}$.
	We will proof Lemma \ref{improvBoundLemma} by establishing the similar claim
	\begin{equation}\label{imprvBndEqP}
		\lvert C(\bm{u}) - C(\bm{u}_\pi) \rvert \leq \!\! \sum_{k = \left\lceil\frac{p}{2}\right\rceil+1}^p \!\! (u_{i_k} - u_{i_1})\text{.}
	\end{equation}
	Then \eqref{imprvBndEq} follows immediately, as 
	\begin{equation*}
		\sum_{k = \left\lceil\frac{p}{2}\right\rceil+1}^p \!\! (u_{i_k} - u_{i_1}) \leq \!\! \sum_{i = \left\lceil\frac{d}{2}\right\rceil+1}^d \!\! (u_i - u_1)
	\end{equation*}
	holds true for all $p$ and the corresponding index sets.
	
	The proof of \eqref{imprvBndEqP} will be an induction on $p$. For $p=2$ equation \eqref{imprvBndEqP} holds true due to Lemma \ref{transpLemma}. Now assume \eqref{imprvBndEqP} holds for $p-1$ (with $p\geq 3$). The proof will be completed by a case-by-case analysis, dependent on $y$ in $i_y \coloneqq \pi(i_p)$. In any case $y\not=p$ as $i_p$ is by definition no fixed point of $\pi$.
	\begin{case}\label{fall1}%Fall 1
		$y \in \left\{\left\lceil\frac{p}{2}\right\rceil+1,\ldots,p-1\right\}$: Just like in Example \ref{permEx}, we can see $\pi$ as a composition of at most $p-1$ transpositions, such that each $i_k$ is put in its place, starting with $i_p$. Therefore, we have $\pi=\sigma \circ \tau_p$, where $\tau_p \coloneqq \bigl(i_p\, \pi(i_p)\bigr)$ and $\sigma$ is the permutation which is generated by all the remaining transpositions. As $\tau_p(i_p)=\pi(i_p)$ by definition, $i_p$ is a fixed point of $\sigma$, so $\sigma$ permutes just $p-1$ elements. Thus we get
		\begin{align*}
			\lvert C(\bm{u})-C(\bm{u}_\pi)\rvert &\leq \lvert C(\bm{u})-C(\bm{u}_{\sigma})\rvert + \lvert C(\bm{u}_\sigma)-C(\bm{u}_\pi)\rvert \\
			&\leq  \!\! \sum_{k=\left\lceil \frac{p-1}{2} \right\rceil +1}^{p-1} \!\!\! (u_{i_k} - u_{i_1})+ (u_{i_p}-u_{i_y}) \leq  \!\! \sum_{k = \left\lceil\frac{p}{2}\right\rceil+1}^p \!\! (u_{i_k} - u_{i_1}) 
		\end{align*}
		by the induction hypothesis and Lemma \ref{transpLemma} as $\left\lceil\frac{p-1}{2}\right\rceil+1 \leq y \leq p-1$.		
	\end{case}
	\begin{case}%Fall 2
		$p=2n+1$ and $y=\left\lceil\frac{p}{2}\right\rceil=n+1$: Analogous to Case \ref{fall1}, as $\left\lceil\frac{p-1}{2}\right\rceil +1=n+1$.
	\end{case}
	\begin{case}\label{fall3} %Fall3
		$p = 2n$ and $y=\left\lceil\frac{p}{2}\right\rceil=n$: Now let $i_x \coloneqq \pi^{-1}(i_p)$ ($x\not=p$ as $i_p$ is not a fixed point of $\pi$).
		\begin{subcase}%Fall3.1
			$x>y$: Similar to Case \ref{fall1} (resp.\ Example \ref{permEx}) we write $\pi$ as a composition of tranpositions. This time $\pi = \sigma \circ \tau_1 \circ \tau_2$, with
			\begin{equation*}
				\tau_1 \coloneqq (i_x\, i_p), \quad \tau_2 \coloneqq (i_y\, i_x)
			\end{equation*}
			and $\sigma$ being the composition of the remaining transpositions. $i_p$ and $i_x$ are fixed points of $\sigma$, as $\tau_1 \circ \tau_2 (i_p) = \pi(i_p)$ and $\tau_1 \circ \tau_2(i_x)= \pi(i_x)$. So $\sigma$ permutes $p-2$ elements. Because of $\tau_1 \circ \tau_2=(i_y\,i_x\,i_p)$, with Example \ref{improvBoundExample} we get
			\begin{align*}
				\lvert C(\bm{u})-C(\bm{u}_\pi)\rvert &\leq\lvert C(\bm{u})-C(\bm{u}_{\sigma})\rvert + \lvert C(\bm{u}_\sigma)-C(\bm{u}_\pi)\rvert \\
				&\leq  \!\! \sum_{k=\left\lceil \frac{p-2}{2} \right\rceil +1,\,k\not=x}^{p-1} \!\!\! (u_{i_k} - u_{i_1})+ (u_{i_p}-u_{i_y}) \leq  \!\! \sum_{k = \left\lceil\frac{p}{2}\right\rceil+1}^p \!\! (u_{i_k} - u_{i_1}) 
			\end{align*}
			by the induction hypothesis and Lemma \ref{transpLemma}.
		\end{subcase}
		\begin{subcase}%Fall3.2
			$x=y$: With $\pi = \sigma \circ \tau$ and $\tau \coloneqq (i_x\, i_p)$ (see Case \ref{fall1} or Example~\ref{permEx}), we get \eqref{imprvBndEqP} analogous to Case \ref{fall1}.
		\end{subcase}
		\begin{subcase}%Fall3.3
			$x<y$: Similar to case 3.1.\ we write $\pi=\sigma \circ \tau_1 \circ \tau_2$. This time with
			\begin{equation*}
				\tau_1 \coloneqq (i_x\, i_y), \quad \tau_2 \coloneqq (i_y\, i_p)
			\end{equation*}
			we get \eqref{imprvBndEqP} analogous to Case 3.1.
		\end{subcase}
	\end{case}
	\begin{case}%Fall4
		$y \in \left\{1,\ldots,\left\lceil\frac{p}{2}\right\rceil-1\right\}$: With $i_x \coloneqq \pi^{-1}(i_p)$ this case can be solved analogous to Case \ref{fall3}, which completes the proof.	
		%		
		%fuer Diss, in Paper eher weglassen:
%		\begin{subcase}%Fall4.1
%			$x>y$: With a suitable $\sigma_2 \in S(1,\ldots,x-1,x+1,\ldots,d-1) \cong S_{d-2}$ we can split $\pi$ into $\sigma_2 \circ \sigma_1$, with $\sigma_1 = (yxd)$. Analogous to Case \ref{fall1}, we have
%			\begin{equation*}
%				\lvert C(\bm{u})-C(\bm{u}_\pi)\rvert \leq u_d-u_y +  \!\! \sum_{j=\left\lceil \frac{d-2}{2} \right\rceil +\eta, j\not=x}^{d-1} \!\!\! (u_j - u_1) \leq  \!\! \sum_{j = \left\lceil\frac{d}{2}\right\rceil+1}^d \!\! (u_j - u_1)\text{,}
%			\end{equation*}
%			where $\eta = 1 + \Ind\bigl(x \leq \left\lceil\frac{d}{2}\right\rceil\bigr)$.
%		\end{subcase}
%		\begin{subcase}%Fall4.2
%			$x=y$: With a suitable $\sigma \in S(1,\ldots,x-1,x+1,\ldots,d-1)\cong S_{d-2}$ we can split $\pi$ into $\sigma \circ \tau_{x,d}$. Analogous to Case \ref{fall1} we get \eqref{GleichungFall3.3}.
%		\end{subcase}
%		\begin{subcase}%Fall4.3
%			$x<y$: With a suitable $\sigma_2 \in S(1,\ldots,x-1,x+1,\ldots,d-1) \cong S_{d-2}$ we can split $\pi$ into $\sigma_2 \circ \sigma_1$, with $\sigma_1 = (xdy)$. Analogous to Case \ref{fall1}, we get \eqref{GleichungFall3.3}.
%		\end{subcase}
	\end{case}
\end{proof}

Now we are able to prove, that for $d>2$, the point $\bm{u}^\ast$, where maximal non-exchangeability is possible, is unique if and only if the dimension is odd.
\begin{lemma}\label{oddLemma}
	Let $d>2$, $\mathcal{C}_d \coloneqq \{C:[0,1]^d\to\R: C \text{ is a $d$-copula}\}$ and %$\mathcal{M}\coloneqq$ 
	\begin{equation*}
		\mathcal{M}\coloneqq \left\{\bm{u}\in[0,1]^d: u_1\leq\ldots\leq u_d, \exists \pi \in S_d\  \exists C \in \mathcal{C}_d \text{ s.t. }  \lvert C(\bm{u})-C(\bm{u}_\pi)\rvert = \tfrac{d-1}{d+1}\right\} \text{.}
	\end{equation*} %assumed $u_1 \leq \ldots \leq u_d$
	Then $\lvert \mathcal{M} \rvert=1$ if and only if $d=2n+1$ (for a $n\in\N$). If $d=2n$, then $\mathcal{M}$ is a $(n-1)$-dimensional manifold.
\end{lemma}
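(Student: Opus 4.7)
My plan has three stages: first derive necessary conditions on any $\bm u^\ast \in \mathcal{M}$, then read off the singleton or manifold structure depending on the parity of $d$, and finally verify that every point satisfying the conditions is actually attained, which is where the real work lies for even $d$.

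Set $\mu := \lceil d/2 \rceil$ and assume $u_1^\ast \leq \cdots \leq u_d^\ast$. I plan to combine three inequalities valid at $\bm u^\ast$: (i) $u_1^\ast \geq \tfrac{d-1}{d+1}$, from the opening case of the proof of Lemma~\ref{upperBndLemma}; (ii) $\sum_{i=2}^{d} u_i^\ast \leq \tfrac{d(d-1)}{d+1}$, obtained by rewriting the Fr\'echet--Hoeffding bound $M_d(\bm u^\ast) - W_d(\bm u^\ast) \geq \tfrac{d-1}{d+1}$ used in \eqref{upperBndInEq2}; and (iii) $\sum_{i=\mu+1}^{d}(u_i^\ast - u_1^\ast) \geq \tfrac{d-1}{d+1}$, which is exactly Lemma~\ref{improvBoundLemma}. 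Subtracting (iii) from (ii) and using $\sum_{i=2}^{\mu} u_i^\ast \geq (\mu-1)u_1^\ast$ will collapse the chain to $(d-1)u_1^\ast \leq \tfrac{(d-1)^2}{d+1}$, i.e.\ $u_1^\ast \leq \tfrac{d-1}{d+1}$; combining with (i) pins down $u_1^\ast = \tfrac{d-1}{d+1}$, and tracing equality through the argument forces $u_1^\ast = \cdots = u_\mu^\ast = \tfrac{d-1}{d+1}$ together with $\sum_{i=\mu+1}^{d} u_i^\ast = (d-\mu+1)\tfrac{d-1}{d+1}$.

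When $d = 2n+1$ we have $\mu = n+1$ and $d-\mu = n$, so the displayed identity reads $\sum_{i=n+2}^{2n+1} u_i^\ast = n$; since the $n$ summands each lie in $[0,1]$, all must equal $1$, and $\bm u^\ast$ is uniquely determined. When $d = 2n$ we have $\mu = n$ and the identity $\sum_{i=n+1}^{2n} u_i^\ast = (n+1)\tfrac{2n-1}{2n+1}$ is a single linear equation in $n$ unknowns inside the ordered box $\tfrac{2n-1}{2n+1} \leq u_{n+1}^\ast \leq \cdots \leq u_{2n}^\ast \leq 1$; this carves out a convex polytope $\tilde{\mathcal{M}}$ of dimension $n-1$, with interior produced for instance by spreading $u_{n+1}^\ast, \ldots, u_{2n-1}^\ast$ strictly between $\tfrac{2n-1}{2n+1}$ and $1$ and solving for $u_{2n}^\ast$. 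This gives the inclusion $\mathcal{M} \subseteq \tilde{\mathcal{M}}$.

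For the reverse inclusion in the even case I take $\pi$ to be the order-reversing permutation $\pi(k) = d - k + 1$. The sum identity $\sum_{i=1}^{d}(1-u_i^\ast) = 1$ persists on $\tilde{\mathcal{M}}$, so the recipe \eqref{CopBsp} still defines a copula and the first half of Lemma~\ref{BspLemma}'s proof yields $C^\ast(\bm u^\ast) = 0$ with no change. A direct computation gives $\sum_{j=0}^{d-1}(u_{d-j}^\ast - u_{j+1}^\ast)^+ = \sum_{i=n+1}^{2n} u_i^\ast - \sum_{i=1}^{n} u_i^\ast = \tfrac{d-1}{d+1}$ throughout $\tilde{\mathcal{M}}$, which is the target value for $C^\ast(\bm u^\ast_\pi)$. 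The case analysis $\alpha_{j,k} \leq \beta_{j,k}$ in the proof of Lemma~\ref{BspLemma} only delivers this when $u_{n+1}^\ast \leq \tfrac{d}{d+1}$, and overcoming this restriction is the main obstacle: for larger $u_{n+1}^\ast$ I plan to replace \eqref{CopBsp} by a tailored shuffle of $M_d$ using the same cuts $\lambda_k = \sum_{i=1}^{k}(1-u_i^\ast)$ but with a rearrangement adapted to $\bm u^\ast$, set up so that $\{\bm y \leq \bm u^\ast\}$ receives no mass while $\{\bm y \leq \bm u^\ast_\pi\}$ receives mass $\tfrac{d-1}{d+1}$. The feasibility of both constraints simultaneously is precisely what the identity above encodes; once the tailored shuffle is established, $\mathcal{M} = \tilde{\mathcal{M}}$ and the $(n-1)$-dimensional manifold structure follows.
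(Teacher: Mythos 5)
Your necessary-condition analysis is correct and is essentially the paper's own: combining $u_1^\ast\ge\tfrac{d-1}{d+1}$, the Fr\'echet--Hoeffding estimate behind \eqref{upperBndInEq2} and Lemma~\ref{improvBoundLemma} does force $u_1^\ast=\cdots=u_{\lceil d/2\rceil}^\ast=\tfrac{d-1}{d+1}$ and $\sum_{i=\lceil d/2\rceil+1}^{d}u_i^\ast=(d-\lceil d/2\rceil+1)\tfrac{d-1}{d+1}$, which for odd $d$ pins down the single candidate point and for $d=2n$ describes exactly the paper's set $\tilde{\mathcal M}$ (your weaker lower bound $u_{n+1}^\ast\ge\tfrac{d-1}{d+1}$, the sum identity and $u_i^\ast\le 1$ together already force $u_{n+1}^\ast\ge\tfrac{d}{d+1}$, so your polytope coincides with $\tilde{\mathcal M}$). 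The gap is in the converse inclusion. Membership in $\mathcal M$ requires exhibiting, for every candidate point, a copula and a permutation attaining $\tfrac{d-1}{d+1}$. For odd $d$ you do not address this at all, although it comes for free from Lemma~\ref{BspLemma}, whose $\bm u^\ast$ in \eqref{uStern} is precisely the unique candidate. For even $d$ the attainment step is the substance of the lemma, and your proposal stops at a declaration of intent: a ``tailored shuffle \ldots\ set up so that $\{\bm y\le\bm u^\ast\}$ receives no mass while $\{\bm y\le\bm u^\ast_\pi\}$ receives mass $\tfrac{d-1}{d+1}$'' merely restates the target; no shuffling structure is written down, and no argument is given that such a mass distribution exists, has uniform margins, and controls the two orthant masses simultaneously.

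This gap is not a formality. The identity $\sum_{i=n+1}^{2n}u_i^\ast-\sum_{i=1}^{n}u_i^\ast=\tfrac{d-1}{d+1}$ only says the desired values are compatible with the Fr\'echet--Hoeffding bounds at the two points; producing a genuine copula meeting both constraints is exactly what the paper's Appendix~\ref{Anhang} is for, and that construction is not ``the same cuts $\lambda_k=\sum_{i\le k}(1-u_i^\ast)$ with a rearrangement'': it uses a strictly finer system of $3n-1$ hypercubes with side lengths $\tfrac{1}{d+1}+\delta_i$, $\tfrac{1}{d+1}-\delta_j$ and $\tfrac{2}{d+1}$, adapted to the $\delta_j$'s, together with a verification of the shuffling-structure conditions and of the value \eqref{deltaBsp} at $\bm u^\ast_\pi$. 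The appendix's second example also confirms that the unmodified $C^\ast$ of \eqref{CopBsp} fails off the canonical point ($C^\ast(\tilde{\bm u})=\delta_1>0$, so the difference at $\tilde{\bm u}$ is strictly below $0.6$), so some new construction is unavoidable. Until you supply and verify one for every $\bm u^\ast\in\tilde{\mathcal M}$, you have only shown $\mathcal M\subseteq\tilde{\mathcal M}$; the claims that $|\mathcal M|=1$ for odd $d$ (nonemptiness) and that $\mathcal M$ is an $(n-1)$-dimensional manifold for $d=2n$ (rather than a proper subset of one) remain unproven.
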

\begin{proof}
	Let  $\bm{u} \in \mathcal{M}$ and $\tilde{u}_i \coloneqq u_i - \frac{d-1}{d+1}\in \bigl[0,\, \frac{2}{d+1}\bigr]\ (*)$. %The lower inequality
	The left bound of $\tilde{u}_i$ follows from $\frac{d-1}{d+1}=|C(\bm{u})-C(\bm{u}_\pi)|\le M_d(\bm{u})\le u_i $ for any $i=1,\dots,d\,.$ From \eqref{upperBndInEq2} we find that any such $\bm{u}$ satisfies $2\frac{d-1}{d+1} -\sum_{i=2}^d\tilde{u}_i \geq \frac{d-1}{d+1}\,,$ i.e., it holds that
	\begin{equation*}%\label{oungl}
		\frac{d-1}{d+1} \geq \sum_{i=2}^d\tilde{u}_i\ge \sum_{i = \left\lceil\frac{d}{2}\right\rceil+1}^d\negthickspace \tilde{u}_i\,.
	\end{equation*}
 This and the inequality $\sum_{i = \left\lceil\frac{d}{2}\right\rceil+1}^d (u_i - u_1)=\sum_{i = \left\lceil\frac{d}{2}\right\rceil+1}^d (\tilde{u}_i - \tilde{u}_1) \geq \frac{d-1}{d+1}$ from Lemma \ref{improvBoundLemma} yield
	\begin{equation}\label{untenuSchlangeEq}
		\sum_{i = \left\lceil\frac{d}{2}\right\rceil+1}^d \negthickspace  \tilde{u}_i = \frac{d-1}{d+1}
	\end{equation}
	 for every $\bm{u}\in \mathcal{M}$. Let $d=2n+1$ then the only way for \eqref{untenuSchlangeEq} to be true is
	 \begin{equation*}
	 	\tilde{u}_1=\ldots=\tilde{u}_{\left\lceil\frac{d}{2}\right\rceil} = 0, \quad \tilde{u}_{\left\lceil\frac{d}{2}\right\rceil+1}=\ldots=\tilde{u}_d=\frac{2}{d+1}
	 \end{equation*}
	 as $0\leq \tilde{u}_j \leq \frac{2}{d+1}$ for all $j=1,\ldots,d$.
	
	 Now let $d=2n$ and $\bm{u} \in [0,1]^d$ with
	 \begin{equation*}
	 	u_1=\ldots =u_{n}= \frac{d-1}{d+1}, \quad u_{n+j} = \frac{d}{d+1}+\delta_j \text{ for } j=1,\ldots,n
	 \end{equation*}
	 such that $\delta_j \in \bigl[0,\frac{1}{d+1}\bigr]$ and
	 \begin{equation*}
	 	\delta_1\leq \ldots \leq \delta_n, \quad \sum_{j=1}^n \delta_j = \frac{n-1}{d+1}
	 \end{equation*}
	 holds. Let $\tilde{\mathcal{M}}$ be the set of all such $\bm{u}$. For each $\bm{u} \in \tilde{\mathcal{M}}$ there exists a permutation $\pi$ and a copula $C$, such that $\lvert C(\bm{u}) - C(\bm{u}_\pi)\rvert = \frac{d-1}{d+1}$. We will construct such a copula by the Shuffle of Min method, presented by \citet{mikusinski1992} and  \citet{Durante2010}, in %Appendix~\ref{Anhang}.
	 the Appendix. Therefore, we have $\tilde{\mathcal{M}}\subseteq \mathcal{M}$. Now, let $\bm{u}\in \mathcal{M}$. If we assume $u_{n+1}<\frac{d}{d+1}$, i.e., $\tilde{u}_{n+1} <\frac{1}{d+1}$ then equation \eqref{untenuSchlangeEq} implies that there exists some $\tilde{u}_{n+j}> \frac{2}{d+1}$ contradicting~($*$) in %front of equation \eqref{oungl}.
	 the beginning of the proof. Hence, we can write $u_{n+j}=\frac{d}{d+1} +\delta_j$ with $0\le \delta_1\le \dots \le \delta_n\,.$ Consequently we have $\tilde{u}_{n+j}= \frac{1}{d+1} +\delta_j\,, \;  j=1,\ldots,n$ and equation \eqref{untenuSchlangeEq} implies
	 \begin{equation*}
	 	\sum_{j=1}^n \delta_j = \frac{n-1}{d+1}
	 \end{equation*}
which means that $\bm{u} \in \tilde{\mathcal{M}}$ and thus $\mathcal{M}\subseteq\tilde{\mathcal{M}}$.
\end{proof}

The above proof shows, that for every $\bm{u} \in \mathcal{M}$ the first $\lceil \frac{d}{2} \rceil$ components are equal. Therefore, even for a fixed $\bm{u}^\ast \in \mathcal{M}$ and a fixed $C \in \mathcal{C}_d$ there's never a unique $\pi \in S_d$ which maximizes \eqref{mainTheoEq} (for $d>2$). E.\,g.\ let $\pi$ be such a permutation, then $\tilde \pi \coloneqq \pi \circ \tau_{12}$ maximizes \eqref{mainTheoEq} as well.

\section{Acknowledgements}
The authors wish to thank the anonymous referees for their suggestions, which---among others---led to a simpler and shorter proof of Lemma \ref{transpLemma}.
%% The Appendices part is started with the command \appendix;
%% appendix sections are then done as normal sections
%% \appendix
\appendix

%\include{Anhang}
%%%%%%%%%%%%%%%%%
%
%	Anhang (ohne include, da sonst erste Seite frei)
%
%%%%%%%%%%%%%%%%%
\section{Examples}\label{Anhang}
Let $d=2n$ and $\bm{u} \in \mathcal{M}$ as described in the proof of Lemma \ref{oddLemma}, i.e.
\begin{equation*}
	u_1=\ldots =u_{n}= \frac{d-1}{d+1}, \quad u_{n+j} = \frac{d}{d+1}+\delta_j \text{ for } j=1,\ldots,n
\end{equation*}
such that $\delta_j \in \bigl[0,\frac{1}{d+1}\bigr]$ and
\begin{equation*}
	\delta_1\leq \ldots \leq \delta_n, \quad \sum_{j=1}^n \delta_j = \frac{n-1}{d+1} \text{.}
\end{equation*}
Let $\pi \in S$ be the order reversing permutation, i.e.\ $\pi(j) = d-j+1$ for $j=1,\ldots,d$. By applying the Shuffle-Of-Min-Method, we will construct a copula $C$, such that $\lvert C(\bm{u}) - C(\bm{u}_\pi)\rvert = \frac{d-1}{d+1}$.

According to Remark 2.1.\ in \citet{Durante2010}, all that is needed for the construction of such a copula, is a so called \emph{shuffling structure} of $d$-dimensional orthotopes and a system of copulas $(C_i)$. We use $C_i \equiv M_d$ for all $i$ for simplicity, but other choices, especially non-singular copulas are possible. Now for the orthotopes $J_i^1\times\ldots \times J_i^d$ (with $i\in\{1,\ldots, 3n-1\}$): In the following, we will give $J_i^k$ for all cases of $i\in\{1,\ldots, 3n-1\}$ and $k \in\{1,\ldots,d\}$.
\setcounter{case}{0}
\begin{case}%Fall1
	$i \in \{1,\ldots,n-1\}$:
	\begin{subcase}
		$k \in \{1,\ldots,n-i\}\cup\{n+1,\ldots,2n\}$ then: \\ $J_i^k \coloneqq \bigl[\sum_{j=1}^{i-1}\bigl(\frac{1}{d+1}+\delta_j\bigr) , \sum_{j=1}^{i}\bigl(\frac{1}{d+1}+\delta_j\bigr) \bigl]$
	\end{subcase}
	\begin{subcase}
		$k = n-i+1$ then: $J_i^k \coloneqq \bigl[\frac{d-1}{d+1} , \frac{d}{d+1}+\delta_i \bigl]$
	\end{subcase}
	\begin{subcase}
		$i\geq 2$ and $k \in \{n-i+2,\ldots,n\}$ then: \\ $J_i^k \coloneqq \bigl[\sum_{j=1, j\not=n+1-k}^{i-1}\bigl(\frac{1}{d+1}+\delta_j\bigr) , \sum_{j=1, j\not=n+1-k}^{i}\bigl(\frac{1}{d+1}+\delta_j\bigr)\bigl]$
	\end{subcase}
\end{case}
\begin{case}%Fall2
	$i \in \{n,\ldots,2n-2\}$ and:
	\begin{subcase}
		$k =1$ then: \\[1mm]$J_i^k \coloneqq \bigl[\frac{d-2}{d+1}-\delta_n+\sum_{j=1}^{i-n}\bigl(\frac{1}{d+1}-\delta_j\bigr)  ,\frac{d-2}{d+1}-\delta_n+\sum_{j=1}^{i-n+1}\bigl(\frac{1}{d+1}-\delta_j\bigr)  \bigl]$
	\end{subcase}
	\begin{subcase}
		$n\geq 3$ and $k \in \{2,\ldots,2n-i-1\}$ then: \\[1mm] $J_i^k \coloneqq \bigl[\frac{d-3}{d+1}-\delta_n-\delta_{n+1-k}+\sum_{j=1}^{i-n}\bigl(\frac{1}{d+1}-\delta_j\bigr)   , \frac{d-3}{d+1}-\delta_n-\delta_{n+1-k}+\sum_{j=1}^{i-n}\bigl(\frac{1}{d+1}-\delta_j\bigr)  \bigl]$
	\end{subcase}
	\begin{subcase}
		$k=2n-i$ then: $J_i^k \coloneqq \bigl[\frac{d}{d+1}+\delta_{n+1-k}, 1\bigl]$
	\end{subcase}
	\begin{subcase}
		$i\geq n+1$ and $k \in \{2n-i+1,\ldots,n\}$ then: \\[1mm] $J_i^k \coloneqq \bigl[\frac{d-4}{d+1}-\delta_n+\sum_{j=1}^{i-n}\bigl(\frac{1}{d+1}-\delta_j\bigr)  , \frac{d-4}{d+1}-\delta_n+\sum_{j=1}^{i-n+1}\bigl(\frac{1}{d+1}-\delta_j\bigr)\bigl]$
	\end{subcase}
	\begin{subcase}
		$k \in \{n+1,\ldots,2n\}$ then: \\[1mm] $J_i^k \coloneqq \bigl[\frac{d}{d+1}-\delta_n+\sum_{j=1}^{i-n}\bigl(\frac{1}{d+1}-\delta_j\bigr)  , \frac{d}{d+1}-\delta_n+\sum_{j=1}^{i-n+1}\bigl(\frac{1}{d+1}-\delta_j\bigr)\bigl]$
	\end{subcase}
\end{case}
\begin{case}%Fall3
	$i \in \{2n-1,\ldots,3n-2\}$ and:
	\begin{subcase}
		$k =1$ then: \\[1mm] $J_i^k \coloneqq \bigl[\frac{d-2}{d+1}+\sum_{j=1}^{i-2n+1}\bigl(\frac{1}{d+1}-\delta_j\bigr) , \frac{d-2}{d+1}+\sum_{j=1}^{i-2n+2}\bigl(\frac{1}{d+1}-\delta_j\bigr) \bigl] $
	\end{subcase}
	\begin{subcase}
		$k \in \{2,\ldots,n\}$ then:\\[1mm] $J_i^k \coloneqq \bigl[\frac{d-4}{d+1}+\sum_{j=1}^{i-2n+1}\bigl(\frac{1}{d+1}-\delta_j\bigr) , \frac{d-4}{d+1}+\sum_{j=1}^{i-2n+2}\bigl(\frac{1}{d+1}-\delta_j\bigr)\bigl]$
	\end{subcase}
	\begin{subcase}
		$k \in \{n+1,\ldots,3n-2\}\setminus\{i-n+2\}$ then:\\[1mm] $J_i^k \coloneqq \bigl[\frac{d}{d+1}+\sum_{j=1,j\not=k-n}^{i-2n+1}\bigl(\frac{1}{d+1}-\delta_j\bigr) , \frac{d}{d+1}+\sum_{j=1,j\not=k-n}^{i-2n+2}\bigl(\frac{1}{d+1}-\delta_j\bigr)\bigl]$
	\end{subcase}
	\begin{subcase}
		$k = i-n+2$: $J_i^k \coloneqq \bigl[ \frac{d}{d+1} + \delta_{k-n},1\bigl]$
	\end{subcase}
\end{case}
\begin{case}%Fall4
	$i =3n-1$:
	\begin{subcase}
		$k =1$: $J_i^k \coloneqq \bigl[\frac{d-1}{d+1},1\bigl]$
	\end{subcase}
	\begin{subcase}
		$k \in \{2,\ldots,n\}$ then: $J_i^k \coloneqq \bigl[\frac{d-3}{d+1},\frac{d-1}{d+1}\bigl]$
	\end{subcase}
	\begin{subcase}
		$k \in \{n+1,\ldots,2n\}$ then: $J_i^k \coloneqq \bigl[\frac{d-2}{d+1}-\delta_n,\frac{d}{d+1}-\delta_n\bigl]$
	\end{subcase}
\end{case}
By Definition 2.1.\ in \citet{Durante2010}, the intervals $J_i^k$ must fulfill four conditions, in order to get a proper copula:
\begin{enumerate}%[(S1)]
	\item First, $i$ must run in a finite or countable index set. This is obviously the case, as $1\leq i \leq 3n-1$.
	\item Second, for every $k \in \{1,\ldots,d\}$ and $i_1 \not= i_2$ the intervals $J_{i_1}^k$ and $J_{i_2}^k$ have at most one endpoint in common. This condition is tedious to verify, but nonetheless fulfilled.
	\item Third, the orthotopes must be $d$-hypercubes, i.e. $\big\lvert J_{i}^{k_1}\big\rvert =\big\lvert J_{i}^{k_2}\big\rvert $ for every $i$ and every pair $k_1,k_2$. This is the case, as for every $k$
		\begin{equation*}
			\big\lvert J_i^k \big\rvert = \begin{cases}
				\tfrac{1}{d+1} + \delta_i &\text{ for $i \in \{1,\ldots,n-1\}$,} \\
				\tfrac{1}{d+1} + \delta_{i-n+1} &\text{ for $i \in \{n,\ldots,2n-2\}$,} \\
				\tfrac{1}{d+1} + \delta_{i-2n+2} &\text{ for $i \in \{2n-1,\ldots,3n-2\}$,} \\
				\tfrac{2}{d+1} &\text{ for $i=3n-1$.}
			\end{cases}
		\end{equation*}
	\item Last, for every $k$, the length of the intervals must sum up to $1$. %This is the case, as
		
		\begin{equation*}
			\sum_{i=1}^{3n-1} \big\lvert J_i^k\big\rvert = \sum_{i=1}^{n-1}\bigl(\tfrac{1}{d+1}+\delta_i \bigr) + \sum_{i=n}^{2n-2}\bigl(\tfrac{1}{d+1}-\delta_{i-n+1}\bigr) +\negthickspace \sum_{i=2n-1}^{3n-2}\negthickspace\bigl(\tfrac{1}{d+1}-\delta_{i-2n+2}\bigr) + \tfrac{2}{d+1} = 1
		\end{equation*}
		for every $k$.
\end{enumerate}
Analogous to \eqref{duranteEq} we get an explicit expression of $C$, namely
\begin{equation}\label{deltaBsp}
	C(\bm{u}) = \sum_{i=1}^{3n-1} \min\Bigl(\bigl(u_1-a_i^1\bigr)^+  , \ldots,\bigl(u_d-a_i^d\bigr)^+, \big\lvert J_i^1\big\rvert \Bigr)
\end{equation}
where $a_i^k$ is the left limit of the interval $J_i^k$. The distribution of mass within the $d$-hypercubes is arbitrary, as long as there is exactly the mass $\big\lvert J_i^1\big\rvert$ in the hypercube $J_i^1\times\ldots \times J_i^d$. In our example, all the mass is on the diagonal. For a non-singular copula, one could spread the mass evenly within the hypercubes, for example replace $M_d$ in \eqref{duranteEq} by the Independence Copula $\pi_d$.

Let's clarify things with two small examples for $d=4$:
\begin{example}\label{Bsp1}
	In \eqref{uStern} we get $\bm{u}^\ast = (0.6,0.6,0.8,1)$. The copula in \eqref{CopBsp} is given by
	\begin{eqnarray*}
		\lefteqn{C^\ast(\bm{u}) = \min\bigl\{(u_1-0.6)^+,(u_2-0.2)^+,u_3,u_4\bigr\}+}\\ &+& \min\bigl\{(u_2-0.6)^+, (u_3-0.4)^+, (u_4-0.4)^+, u_1 \bigr\}\\
		&+&\min\bigl\{ (u_3-0.8)^+, (u_4-0.8)^+, (u_1-0.4)^+, u_2\bigr\} + \min\bigl\{ \underbrace{(u_4-1)^+}_{=0}, \ldots\bigr\}\text{.}
	\end{eqnarray*}
	Therefore, we have $\lvert C^*(\bm{u}^\ast) - C^*(1,0.8,0.6,0.6) \rvert = 0.6$.
\end{example}
\begin{example}
	As the dimension is even, the point $\bm{u}^\ast$ in Example \ref{Bsp1} is not the only one, in which maximal non-exchangeability is achieved. Let $\delta_1 \in [0,0.1]$ and $\tilde{\bm{u}}=(0.6,0.6,0.8+\delta_1,1-\delta_1)$. Note that $1-\delta_1 = 0.8 + \delta_2$ if $\delta_1 + \delta_2 = 0.2$. The copula in \eqref{deltaBsp} is given by
	\begin{eqnarray*}
		\lefteqn{C(\bm{u}) = \min\bigl\{u_1,(u_2-0.6)^+,u_3,u_4,0.2+\delta_1 \bigr\}+} \\
			&+&\min\bigl\{ (u_1-0.2-\delta_1)^+, (u_2-0.8+\delta_1)^+, (u_3-0.6-\delta_1)^+,\\
&& \hspace*{4mm} (u_4-0.6-\delta_1)^+, 0.2+\delta_1\bigr\}  \\
			&+& \min\bigl\{(u_1-0.4)^+, u_2, (u_3-0.8-\delta_1)^+,(u_4-0.8)^+,0.2-\delta_1 \bigr\} \\
			&+&\min\bigl\{(u_1-0.6+\delta_1)^+, (u_2-0.2+\delta_1)^+, (u_3-0.8)^+, (u_4-1+\delta_1)^+,\delta_1 \bigr\}\\
			&+& \min\bigl\{ (u_1-0.6)^+, (u_2-0.2)^+, (u_3-0.2-\delta_1)^+, (u_4-0.2-\delta_1)^+, 0.4\bigr\} \text{.}
	\end{eqnarray*}
	Therefore, we have $\lvert C(\tilde{\bm{u}}) - C(1-\delta_1, 0.8+\delta_1, 0.6,0.6) \rvert = 0.6$. This copula $C$ is different from the copula $C^\ast$ in Example \ref{Bsp1}, as $C(\tilde{\bm{u}})=0$, but $C^\ast(\tilde{\bm{u}}) = \delta_1$.
\end{example}

%Bibliographie
%\bibliographystyle{alpha}
%\bibliographystyle{apalike}
%\nocite{Durante2009}
\bibliography{bibliography}

\begin{thebibliography}{}

\bibitem[Dummit and Foote, 2009]{DummitFoote}
Dummit, D.~S. and Foote, R.~M. (2009).
\newblock {\em Abstract algebra}.
\newblock Wiley, 3rd edition.

\bibitem[Durante and Fern{\'a}ndez-S{\'a}nchez, 2010]{Durante2010}
Durante, F. and Fern{\'a}ndez-S{\'a}nchez, J. (2010).
\newblock Multivariate shuffles and approximation of copulas.
\newblock {\em Statistics and Probability Letters}, 80(23-24):1827--1834.

\bibitem[Durante et~al., 2010]{Durante2010measures}
Durante, F., Klement, E.~P., Sempi, C., and {\'U}beda-Flores, M. (2010).
\newblock Measures of non-exchangeability for bivariate random vectors.
\newblock {\em Statistical Papers}, 51(3):687--699.

\bibitem[Grabisch et~al., 2009]{Grabisch}
Grabisch, M., Marichal, J.-L., Mesiar, R., and Pap, E. (2009).
\newblock {\em Aggregation functions}.
\newblock Cambridge University Press.

\bibitem[Jaworski, 2009]{Jaworski}
Jaworski, P. (2009).
\newblock On copulas and their diagonals.
\newblock {\em Information Sciences}, 179(17):2863--2871.

\bibitem[Klement and Mesiar, 2006]{KlementMesiar}
Klement, E.~P. and Mesiar, R. (2006).
\newblock How non-symmetric can a copula be?
\newblock {\em Comment. Math. Univ. Carolinae}, 47(1):199--206.

\bibitem[Mikusi{\'n}ski et~al., 1992]{mikusinski1992}
Mikusi{\'n}ski, P., Sherwood, H., and Taylor, M.~D. (1992).
\newblock Shuffles of min.
\newblock {\em Stochastica}, 13(1):61--74.

\bibitem[Mikusi{\'n}ski and Taylor, 2010]{mikusinski2009}
Mikusi{\'n}ski, P. and Taylor, M.~D. (2010).
\newblock Some approximations of $n$-copulas.
\newblock {\em Metrika}, 72(3):385--414.

\bibitem[Nelsen, 1993]{NelsenSymmetry}
Nelsen, R.~B. (1993).
\newblock Some concepts of bivariate symmetry.
\newblock {\em Journal of Nonparametric Statistics}, 3(1):95--101.

\bibitem[Nelsen, 2006]{nelsen}
Nelsen, R.~B. (2006).
\newblock {\em An Introduction to Copulas}.
\newblock Springer, second edition.

\bibitem[Nelsen, 2007]{nelsenExtrem}
Nelsen, R.~B. (2007).
\newblock Extremes of nonexchangeability.
\newblock {\em Statistical Papers}, 48:329--336.

\bibitem[Sklar, 1959]{sklar59}
Sklar, A. (1959).
\newblock Fonctions de r{\'e}partition {\`a} $n$ dimensions et leurs marges.
\newblock {\em Publications de L'Institut de Statistique de L'Universit{\'e} de
  Paris}, 8(229--231):1.

\end{thebibliography}
\bibliographystyle{apalike}
\end{document}